\documentclass[12pt]{article}
\usepackage{amsmath,amssymb,amsfonts,amsthm,bbm,bm}
\usepackage{color}
\usepackage{graphicx}
\usepackage{booktabs,mathtools,stmaryrd}
\usepackage{multirow}

\newtheorem  {proposition} {\hspace{15pt}Proposition}[section]
\newtheorem  {theorem}    {\hspace{15pt} Theorem}[section]

\newtheorem  {lemma}     {\hspace{15pt} Lemma}[section]

\topmargin     = -.25in
\textheight     = 8.55in
\textwidth      = 6.00in
\oddsidemargin   = 0.25in
\evensidemargin = 0.25in

%\raggedbottom
\flushbottom
%\linespread{1.3}

\def\uN{\mathbbm{u}}
\def\pN{\mathbbm{p}}
\def\qN{\mathbbm{q}}
\def\wN{\mathbbm{w}}
\def\wph{\mathbbm{W}}
\def\uph{\mathbbm{U}}
\def\pph{\mathbbm{P}}
\def\qph{\mathbbm{Q}}
\def\vph{\mathbbm{z}}
\def\vphu{\mathbbm{v}}
\def\vphq{\mathbbm{r}}
\def\vphp{\mathbbm{s}}
\def\Vph{\mathbbm{Z}}
\def\Vphu{\mathbbm{V}}
\def\Vphq{\mathbbm{R}}
\newcommand{\dual}[2]{\left\langle#1,#2\right\rangle}

\newcommand{\norm}   [1] {\left\Vert#1\right\Vert}

\newcommand{\enorm }  [1] {\interleave #1\interleave_{E}}

\newcommand{\jump}   [1] {[\![#1]\!]}
\def\af{a^\textsf{1}}
\def\as{a^\textsf{2}}

\begin{document}
	
	\title{\bf The local discontinuous Galerkin method on layer-adapted meshes
		for  time-dependent singularly perturbed convection-diffusion problems}
	\author{Yao Cheng\thanks{School of Mathematical Sciences, Suzhou University of Science and Technology, P.~R.~China.\  email: ycheng@usts.edu.cn.} 
		\and Yanjie Mei\thanks{International Education School, Suzhou University of Science and Technology, P.~R.~China.\  email: yjmei@post.usts.edu.cn.}
	\and Hans-G\"org Roos\thanks{Institute of Numerical Mathematics,
	Technical University of Dresden, Germany.\  email: hans-goerg.roos@tu-dresden.de.}
}

	\maketitle
	
	\begin{abstract}

In this paper we analyze the error as well for the semi-discretization as the full
discretization of a time-dependent convection-diffusion problem. We use for the discretization
in space the local discontinuous Galerkin (LDG) method on a class of layer-adapted meshes
including Shishkin-type and Bakhvalov-type meshes and the implicit $\theta$-scheme in time.
For piecewise tensor-product polynomials of degree $k$
we obtain uniform or almost uniform error estimates with respect to space of
order $k+1/2$ in some energy norm and optimal error estimates with respect to time.
Our analysis is based on careful approximation error estimates for the Ritz projection related to the
stationary problem on the anisotropic meshes used.
We discuss also improved estimates in the one-dimensional case
and the use of a discontinuous Galekin discretization in time.
Numerical experiments are given to support our theoretical results.

\

\noindent {\bf Keywords:} singular perturbation, layer-adapted mesh,
error estimate, local discontinuous Galerkin method

\noindent {\bf AMS subject classifications:} 65M15, 65M12, 65M60
\end{abstract}

\section{Introduction}

We consider the two-dimensional singularly perturbed problem
\begin{subequations}\label{cd:spp:2d}
\begin{align}
 u_t-\varepsilon \Delta u + \bm a \cdot \nabla u + b u
&= f,  & \textrm{in}& \;  \Omega\times (0,T],
\\
 u|_{t=0}&=u_0,& \textrm{in}&\; \overline{\Omega},
\\
 u|_{\partial\Omega} &= 0, & \textrm{for}&\; t\in (0,T],
\end{align}
\end{subequations}
where $\Omega=(0,1)^2$, $T>0$, $\bm a=\bm a(x,y),b=b(x,y),f=f(x,y,t),u_0=u_0(x,y)$
are sufficiently smooth and
\begin{equation}\label{assumption:coef:2d}
\bm a=(\af(x,y),\as(x,y))
\geq(\alpha_1,\alpha_2),\quad
b- \frac12 \nabla\cdot \bm a \geq \beta,
\quad (x,y)\in \Omega,
\end{equation}
for some positive constants $\alpha_1,\alpha_2$ and $\beta$.

Such singularly perturbed problems arise in many applications.
They serve as a model problem
which helps to understand the behavior of numerical methods
in the presence of layers for more complex problems
like the linearized Navier-Stokes equations with high Reynolds number,
the drift-diffusion equations of semiconductor device modeling,
and the Black-Scholes equation from financial modeling \cite{Roos2008}.

The solution of such problems
usually exhibits rapid changes along the boundary,
i.e., the solution forms boundary layers.
 The use of a standard Galerkin finite element method
yields unsatisfactory numerical solution in that situation.
Even on an adapted mesh small oscillations occur, and it can be
difficult to solve the associated discrete algebraic system
efficiently \cite{Linss2001}.

Therefore,
many stabilised finite element methods were developed.
Several such methods are described in \cite{Roos2008}.
The methods described in \cite{Roos2008},
for instance,
streamline-diffusion finite element method (SDFEM),
local projection stabilization (LPS),
continuous interior penalty (CIP)
and discontinuous Galerkin (DG) methods
like nonsymmetric interior penalty DG (NIPG) and symmetric interior penalty DG (SIPG)
use the primal weak formulation for the stationary problem.
Instead, we use the flux formulation, especially
the local discontinous Galerkin (LDG) method.

The LDG method is a kind of finite element methods
which was first proposed
as a generalization of the discontinuous Galerkin method
for a  convection-diffusion problem \cite{Cockburn1998}.
This method inherits many advantages of the DG method
and can simulate the acute change
of singular solution very well \cite{Xu2010}.
The  LDG method has also already been used to solve singularly perturbed problems.
In \cite{Xie2009}, Xie et al. demonstrated that
the LDG solution does not produce any oscillations
on a uniform mesh.
In \cite{Cheng2015,Cheng2017},
Cheng et al. realized  a local stability analysis and proved double-optimal
local error estimates for two explicit fully-discrete LDG methods
on quasi-uniform meshes.
In \cite{Zhu2014},
Zhu et al. performed a uniform convergence analysis
of the LDG method on a standard Shishkin mesh for a stationary problem.

However, as far as the authors' know,
there exist  no results concerning the uniform convergence
of the fully-discrete LDG method on general layer-adapted meshes.
Our paper is characterized by the following features:

\begin{enumerate}
  \item
  In the literature, often Shishkin meshes are used for the discretization of singularly
  perturbed problems due to the simple structure of the mesh.
  However, this mesh gives non-optimal error estimates up to a logarithmic factor
which gets more and more influence for an increasing degree of the polynomials used.
We consider meshes defined by an adequately chosen mesh-generating function in the
layer region which are uniform outside the layer region. Thus we allow as well Shishkin-type (S-type) meshes
as Bakhvalov-type (B-type) meshes, introduced in \cite{Roos1999}. Our unified analysis yields
optimal error estimates for S-type meshes, the results for B-type meshes are almost
optimal.

\item
The key ingredient in our analysis is the use of the Ritz projection related to the stationary problem.
The approximation error estimates obtained on our class of
anisotropic meshes are new.
In the analysis, three two-dimensional local Gauss-Radau projections as well as their stability
and approximation property play an important role.
It is worthy to point out that a modified superconvergence property
and a stability result of \cite{Zhu2014} are particularly used
to derive the error estimates
on the considered class of meshes.

\item
For simplicity, we use an implicit $\theta$-scheme for the discretization in time
and establish (almost) optimal error estimates for the fully discrete LDG-scheme.
In the literature, also BDF methods (backward differencing) \cite{Dolejsi2010} and
Runge-Kutta methods \cite{Vlasak2017} are used.
Since the spatial mesh size is extremely small in the boundary layer region,
explicit time discretizations in \cite{Cheng2015,Cheng2017} are not suitable.
Alternatively, one can also use a DG method for the discretization in time,
as described in the Appendix (see also \cite{Franz2018}, where general stabilization
methods for the primal weak formulation are combined with DG in time).

\end{enumerate}

The paper is organized as follows.
In Section \ref{sec:LDG}, we present the LDG method
for the given singularly perturbed problem \eqref{cd:spp:2d}.
Layer-adapted meshes are introduced in Section \ref{sec:meshes}.
In Section \ref{sec:projections}, we define a Ritz projection
and estimate its approximation errors.
Based on this projection, we realize  error estimates as well
for the semi-discrete as for the fully-discrete LDG method
in Section \ref{sec:convergence:energy:norm}.
In Section \ref{sec:experiments} we present some numerical experiments.
In the last section,
we sketch improved results for the one-dimensional case
and the DG  discretization in time.

\section{The LDG method}
\label{sec:LDG}
\setcounter{equation}{0}

In this section we present the
 semi-discrete and the fully-discrete LDG method for \eqref{cd:spp:2d}.

Let $\Omega_N=\{K_{ij}\}_{i=1,2,\dots,N_x}^{j=1,2,\dots,N_y}$
be a rectangular tessellation of $\Omega$ with elements\\
$K_{ij}=I_i\times J_j$,
where $I_i=(x_{i-1},x_i)$ and $J_j=(y_{j-1},y_j)$.
Denote by $h_{x,i}=x_i-x_{i-1}$, $h_{y,j}=y_j-y_{j-1}$,
$h_{ij}=\max\{h_{x,i},h_{y,j}\}$, $\hbar_{ij}=\min\{h_{x,i},h_{y,j}\}$,
$h=\max_{K_{ij}\in\Omega_N} h_{ij}$ and $\hbar=\min_{K_{ij}\in\Omega_N} \hbar_{ij}$.
Define the discontinuous finite element space as
\begin{equation}
\mathcal{W}_{N}= \{v\in L^2 (\Omega)\colon
v|_{K} \in \mathcal{Q}^{k} (K),  K\in\Omega_N \},
\end{equation}
where $\mathcal{Q}^k(K)$ denotes the space of the tensor-product polynomials
of degree  $k$ in each variable on $K$.
$\mathcal{W}_{N}$ is contained in the broken Sobolev space
\begin{equation}
\mathcal{H}^1(\Omega_N)=\{v\in L^2(\Omega):
v|_{K}\in H^1(K), \ K\in\Omega_N\},
\end{equation}
whose functions allow discontinuity across element interfaces.
For $v\in \mathcal{H}^1(\Omega_N)$ and $y\in J_j$, $j=1,2,\dots,N_y$,
we use $v^\pm_{i,y}=\lim_{x\to x_{i}^\pm}v(x,y)$
to express the traces evaluated from
the right element $K_{i+1,j}$ and the left element $K_{ij}$.
The jump on the vertical edge is thus denoted by
$\jump{v}_{i,y}= v^{+}_{i,y}-v^{-}_{i,y}$
for $i=1,2,\dots,N_x-1$, $\jump{v}_{0,y}=v^{+}_{0,y}$
and $\jump{v}_{N_x,y}=-v^{-}_{N_x,y}$.
Analogously, for $x\in I_i$ and $i=1,2,\dots,N_x$, we can define
the traces $v^\pm_{x,j}$ and the jumps $\jump{v}_{x,j}$, $j=0,1,\cdots,N_y$
on the horizontal edges.

Rewrite \eqref{cd:spp:2d} into an equivalent first order system
\begin{align}\label{para:rewrite:2d}
u_t-p_x-q_y+ \af u_x+ \as u_y+bu=f, \quad
\varepsilon^{-1}p=  u_x,            \quad
\varepsilon^{-1}q=  u_y,            \quad  \textrm{in} \ \Omega.
\end{align}
Let $\dual{ \cdot}{\cdot}_{D}$ be the inner product in $L^2(D)$.
Then, the semi-discrete LDG method reads:\\
For any $t\in(0,T]$, find
%$\wN(t)\equiv\wN=(\uN,\pN,\qN)\in \mathcal{W}_N^3$,
such that in each element $K_{ij}$ the following variational equations
\begin{subequations}\label{LDG:scheme:2d}
\begin{alignat}{1}
 &\dual{\uN_{t}}{\vphu}_{K_{ij}}
 +\dual{(b-\af_x-\as_y)\uN}{\vphu}_{K_{ij}}
 \nonumber\\
 &-\dual{\af \uN-\pN}{\vphu_x}_{K_{ij}}
 +\dual{\af_{i,y}\widetilde{\uN}_{i,y}-\widehat{\pN}_{i,y}}{\vphu^{-}_{i,y}}_{J_j}
 -\dual{\af_{i-1,y}\widetilde{\uN}_{i-1,y}-\widehat{\pN}_{i-1,y}}{\vphu^{+}_{i-1,y}}_{J_j}
 \nonumber\\
 &-\dual{\as \uN-\qN}{\vphu_y}_{K_{ij}}
 +\dual{\as_{x,j}\widetilde{\uN}_{x,j}-\widehat{\qN}_{x,j}}{\vphu^{-}_{x,j}}_{I_i}
 -\dual{\as_{x,j-1}\widetilde{\uN}_{x,j-1}-\widehat{\qN}_{x,j-1}}{\vphu^{+}_{x,j-1}}_{I_i}
 \nonumber\\
 &\hspace{10.3cm} =\dual{f}{\vphu}_{K_{ij}},
 \\
  &\varepsilon^{-1}\dual{\pN}{\vphp}_{K_{ij}}+
  \dual{ \uN}{\vphp_x}_{K_{ij}}
  -\dual{\widehat{\uN}_{i,y}}{\vphp^{-}_{i,y}}_{J_j}
  +\dual{\widehat{\uN}_{i-1,y}}{\vphp^{+}_{i-1,y}}_{J_j}
  =0,
\\
&\varepsilon^{-1}\dual{\qN}{\vphq}_{K_{ij}}+
  \dual{ \uN}{\vphq_y}_{K_{ij}}
  -\dual{\widehat{\uN}_{x,j}}{\vphq^{-}_{x,j}}_{I_i}
  +\dual{\widehat{\uN}_{x,j-1}}{\vphq^{+}_{x,j-1}}_{I_i}
  =0,
\end{alignat}
\end{subequations}
hold for any $\vph=(\vphu,\vphp,\vphq)\in \mathcal{W}_N^3$.
Here the numerical fluxes are defined by
\begin{subequations}\label{flux:diffusion}
\begin{align}
\label{flux:diffusion:q}
\widehat{\pN}_{i,y}
&\;
=\begin{cases}
%\pN_{0,y}^+ ,                                 &i=0,\\
\pN^{+}_{i,y},                                &i=0,1,\dots,N_x-1,\\
\pN^-_{N_x,y} - \lambda_{N_x,y}\uN^-_{N_x,y}, &i=N_x,
\end{cases}
\\
\label{flux:diffusion:u}
\widehat{\uN}_{i,y}
&\;
=\begin{cases}
0,             &\hspace{1.5cm}  i=0,N_x,\\
\uN^{-}_{i,y}, &\hspace{1.5cm}  i=1,2,\dots,N_x-1,\\
\end{cases}
\\
\label{flux:convection:u}
\widetilde{\uN}_{i,y}
&\;
=\begin{cases}
0,             &\hspace{1.5cm}  i=0,\\
\uN^{-}_{i,y}, &\hspace{1.5cm}  i=1,2,\dots,N_x,
\end{cases}
\end{align}
\end{subequations}
for $y\in J_j$ and $j=1,2,\dots,N_y$.
We use $\lambda_{N_x,y}=\varepsilon/h_{x,N_x}$.
Analogously, for $x\in I_i$ and $i=1,2,\dots,N_x$,
we can define $\widehat{\qN}_{x,j}$, $\widehat{\uN}_{x,j}$
and $\widetilde{\uN}_{x,j}$, $j=0,1,\dots,N_y$.

To complete the definition of the LDG method,
we take the initial value\\
$\uN(0)=\pi u_0$
as the local $L^2$-projection of $u_0$.
That means, $\pi u_0$ is the unique function in $\mathcal{W}_N$,
such that in each element $K_{ij}$ it holds
\begin{equation}\label{l2:projection:2d}
\dual{\pi u_0}{\vphu}_{K_{ij}}=\dual{u_0}{\vphu}_{K_{ij}},
\quad \forall \vphu\in \mathcal{Q}^k(K_{ij}).
\end{equation}

For notational convenience, we use
$\dual{w}{v}=\sum_{K_{ij}\in \Omega_N}\dual{w}{v}_{K_{ij}}$
and write the above LDG method into a compact form:\\
For any $t\in(0,T]$, find $\wN=(\uN,\pN,\qN)\in \mathcal{W}_N^3$, such that
\begin{equation}\label{compact:form:2d}
\dual{\uN_{t}}{\vphu}+B(\wN;\vph)=\dual{f}{\vphu}
\quad \forall \vph=(\vphu,\vphp,\vphq)\in \mathcal{W}_N^3,
\end{equation}
where
\begin{align}
\label{B:def:2d}
B(\wN;\vph)=&\;
\mathcal{T}_1(\wN;\vph)+\mathcal{T}_2(\uN;\vph)
+\mathcal{T}_3(\wN;\vphu)+\mathcal{T}_4(\uN;\vphu),
\end{align}
with
\begin{align}
\mathcal{T}_1(\wN;\vph)=&\;
\varepsilon^{-1}[\dual{\pN}{\vphp}+\dual{\qN}{\vphq}]+\dual{(b- \af_x-\as_y)\uN}{\vphu},
\nonumber\\
\mathcal{T}_2(\uN;\vph)=&\;
\dual{ \uN}{\vphp_x}
+\sum_{j=1}^{N_y}\sum_{i=1}^{N_x-1}\dual{\uN^{-}_{i,y}}{\jump{\vphp}_{i,y}}_{J_j}
+\dual{ \uN}{\vphq_y}
+\sum_{i=1}^{N_x}\sum_{j=1}^{N_y-1}\dual{\uN^{-}_{x,j}}{\jump{\vphq}_{x,j}}_{I_i},
\nonumber\\
\mathcal{T}_3(\wN;\vphu)=&\;
\dual{\pN}{\vphu_x}
+\sum_{j=1}^{N_y}\Big[
\sum_{i=0}^{N_x-1}\dual{\pN^{+}_{i,y}}{\jump{\vphu}_{i,y}}_{J_j}
-\dual{\pN^{-}_{N_x,y}}{\vphu^{-}_{N_x,y}}_{J_j}
\Big]
\nonumber\\
+&\dual{\qN}{\vphu_y}
+\sum_{i=1}^{N_x}\Big[
\sum_{j=0}^{N_y-1}\dual{\qN^{+}_{x,j}}{\jump{\vphu}_{x,j}}_{I_i}
-\dual{\qN^{-}_{x,N_y}}{\vphu^{-}_{x,N_y}}_{I_i}
\Big],
\nonumber\\
\mathcal{T}_4(\uN;\vphu)=&\;
-\dual{\af\uN}{\vphu_x}
-\sum_{j=1}^{N_y}\Big[\sum_{i=1}^{N_x}
\dual{\af_{i,y}\uN^{-}_{i,y}}{\jump{\vphu}_{i,y}}_{J_j}
-\dual{\lambda_{N_x,y}\uN^{-}_{N_x,y}}{\vphu^{-}_{N_x,y}}_{J_j}
\Big]
\nonumber\\
&-\dual{\as\uN}{\vphu_y}
-\sum_{i=1}^{N_x}\Big[\sum_{j=1}^{N_y}
\dual{\as_{x,j}\uN^{-}_{x,j}}{\jump{\vphu}_{x,j}}_{I_i}
-\dual{\lambda_{x,N_y}\uN^{-}_{x,N_y}}{\vphu^{-}_{x,N_y}}_{I_i}
\Big].
\nonumber
\end{align}
Induced by \eqref{B:def:2d}, we define an energy norm $\enorm{\wN}^2\equiv B(\wN;\wN)$, that means,
\begin{align}
\label{energy:norm:2d}
\enorm{\wN}^2=&\;
\varepsilon^{-1}\norm{\pN}^2+\varepsilon^{-1}\norm{\qN}^2+\norm{(b-\af_x/2-\as_y/2)^{1/2}\uN}^2
\nonumber\\
&+\sum_{j=1}^{N_y}
\Big[\sum_{i=0}^{N_x}\frac12 \dual{\af_{i,y}}{\jump{\uN}^2_{i,y}}_{J_j}
+\dual{\lambda_{N_x,y}}{\jump{\uN}^2_{N_x,y}}_{J_j}
\Big]
\nonumber\\
&+\sum_{i=1}^{N_x}
\Big[\sum_{j=0}^{N_y}\frac12 \dual{\as_{x,j}}{\jump{\uN}^2_{x,j}}_{I_i}
+\dual{\lambda_{x,N_y}}{\jump{\uN}^2_{x,N_y}}_{I_i}
\Big].
\end{align}

Next we introduce a fully-discrete LDG method,
which is the combination of the semi-discrete LDG method \eqref{compact:form:2d} and an implicit $\theta$-scheme with $\theta\in[1/2,1]$.

Let $M$ be a positive integer and $0=t^0<t^1<\dots<t^M=T$
be an equidistant partition of $[0,T]$.
Define the time interval $K^m=(t^{m-1},t^m]$, $m=1,2,\dots,M$,
with the mesh width $\Delta t=t^m-t^{m-1}$ satisfying $M\Delta t =T$.
Denote by $v^{m}=v(t^m)$ and $v^{m,\theta}=\theta v^{m}+(1-\theta)v^{m-1}$.

The fully-discrete LDG $\theta$-scheme for \eqref{cd:spp:2d} reads:\\
Take $\uph^0 = \pi u_0$ as before.
For any $m=1,2,\dots,M$, find
$\wph^{m}=(\uph^m,\pph^m,\qph^m)\in \mathcal{W}_N^3$ such that
\begin{align}\label{fully:theta:scheme}
\dual{\frac{\uph^m-\uph^{m-1}}{\Delta t}}{\vphu}+B(\wph^{m,\theta};\vph)
=\dual{f^{m,\theta}}{\vphu},
\end{align}
holds for any $\vph=(\vphu,\vphp,\vphq)\in \mathcal{W}_N^3$,
where $B(\cdot;\cdot)$ is defined in \eqref{B:def:2d}.

\section{Layer-adapted meshes}
\label{sec:meshes}

In this section, we introduce a class of layer-adapted meshes based on some precise
information on the exact solution of \eqref{cd:spp:2d} and its derivatives.

%Assume that
%the solution of two-dimensional problem \eqref{cd:spp:2d}
%can be decomposed in a precise way
%that is typical of the behavior observed in solution of \eqref{cd:spp:2d}
%when interior layers are absent.

\begin{proposition}\label{thm:reg:2d}
Assume $u\in C^{l+\kappa}$ in $\Omega\times(0,T)$ with some adequate positive integer $l$
and $0<\kappa<1$.
Moreover, assume the existence of a decomposition of the solution into a smooth term $S$
and layer components
\begin{equation}
u = S+E_{21}+E_{12}+E_{22},
\end{equation}
where $S$ and the layer components satisfy
\begin{subequations}\label{reg:u:2d}
\begin{align}
\left| \partial_x^{i}\partial_y^{j}\partial_t^{m}S(x,y,t)\right|
\leq&\; C,
\\
\left| \partial_x^{i}\partial_y^{j}\partial_t^{m}E_{21}(x,y,t)\right|
\leq &\; C\varepsilon^{-i} e^{-\alpha_1(1-x)/\varepsilon},
\\
\left| \partial_x^{i}\partial_y^{j}\partial_t^{m}E_{12}(x,y,t)\right|
\leq &\; C\varepsilon^{-j} e^{-\alpha_2(1-y)/\varepsilon},
\\
\left| \partial_x^{i}\partial_y^{j}\partial_t^{m}E_{22}(x,y,t)\right|
\leq &\; C\varepsilon^{-(i+j)}e^{-[\alpha_1(1-x)+\alpha_2(1-y)]/\varepsilon},
\end{align}
\end{subequations}
 for positive integers $i,j$ and $m$ with $i+j+2m\le l$.
Here $C>0$ is a constant independent of $\varepsilon$.
\end{proposition}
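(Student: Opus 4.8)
The plan is to establish the decomposition constructively by a matched asymptotic expansion (a Shishkin-type decomposition) combined with a maximum-principle argument for the remainder. The starting point is to verify that the parabolic operator $L_\varepsilon v := v_t - \varepsilon \Delta v + \bm a \cdot \nabla v + b v$ satisfies a comparison principle on $\Omega \times (0,T]$; after a standard substitution $v = e^{\lambda t} w$ with $\lambda$ large this follows from the parabolic weak maximum principle, and it will be the workhorse for all subsequent bounds. First I would construct the smooth component $S$ as a truncated expansion $S = \sum_{n=0}^{\nu} \varepsilon^n S_n$, where the leading term $S_0$ solves the reduced first-order problem $\partial_t S_0 + \bm a\cdot\nabla S_0 + b S_0 = f$ with $S_0|_{t=0}=u_0$ and data prescribed on the inflow boundaries $x=0$, $y=0$; since the components of $\bm a$ are strictly positive by \eqref{assumption:coef:2d}, the characteristics enter the domain there, so $S_0$ is obtained by the method of characteristics and the higher-order correctors $S_n$ solve the same transport operator with right-hand side $\Delta S_{n-1}$. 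The $\varepsilon$-uniform bounds on $\partial_x^i\partial_y^j\partial_t^m S$ then reduce to classical regularity of these transport problems, which hinges on smoothness of $\bm a$, $b$, $f$, $u_0$ and on suitable compatibility of the data.

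Next I would build the edge layers. For the layer at the outflow boundary $x=1$ I introduce the stretched variable $\xi = (1-x)/\varepsilon$ and seek $E_{21}$ as an expansion whose leading term solves a constant-coefficient boundary-layer ODE $\partial_\xi^2 E + \alpha_1 \partial_\xi E = 0$ (the remaining operator acting parametrically in $y,t$), chosen so that $S + E_{21}$ matches the homogeneous Dirichlet condition at $x=1$; this forces the decaying profile $e^{-\alpha_1 \xi} = e^{-\alpha_1(1-x)/\varepsilon}$ and, upon differentiating in $x$, produces exactly the factors $\varepsilon^{-i}$ claimed in \eqref{reg:u:2d}. The layer $E_{12}$ at $y=1$ is constructed symmetrically with $\eta=(1-y)/\varepsilon$. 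The corner layer $E_{22}$ is then obtained in the doubly stretched variables $(\xi,\eta)$ near $(1,1)$ to correct the mismatch left by $S+E_{21}+E_{12}$ along the two outflow edges; its leading term decays like the product $e^{-[\alpha_1(1-x)+\alpha_2(1-y)]/\varepsilon}$, giving the combined $\varepsilon^{-(i+j)}$ derivative growth.

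Finally I would control the remainder $R := u - (S+E_{21}+E_{12}+E_{22})$ obtained after truncating each expansion at a sufficiently high order $\nu$. By construction $L_\varepsilon R$ is of order $\varepsilon^{\nu}$ in the interior and $R$ is small on the parabolic boundary, so applying the comparison principle against barrier functions of the form $C(\varepsilon^{\nu} + e^{-\alpha_1(1-x)/\varepsilon} + e^{-\alpha_2(1-y)/\varepsilon})$ yields the pointwise bound on $R$ itself; the analogous bounds on the derivatives of $R$ follow either by differentiating the equation and repeating the barrier argument, or by interior Schauder estimates rescaled in the stretched variables. Absorbing $R$ into $S$ then completes the decomposition.

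I expect the main obstacle to be the time-dependent setting rather than the layer structure, which parallels the well-understood stationary case (cf.\ \cite{Zhu2014,Roos2008}). Specifically, guaranteeing the assumed global regularity $u \in C^{l+\kappa}$ and the $\varepsilon$-uniformity of the correctors $S_n$ and of the corner-layer bounds requires nontrivial compatibility conditions between $u_0$, $f$ and the boundary data at the spatial corners and at $t=0$; without them the correctors lose smoothness and the derivative estimates in \eqref{reg:u:2d} degrade near the space-time edges. For this reason I would, as is customary, state the decomposition under these compatibility hypotheses and carry out only the edge- and corner-layer constructions in detail, citing the parabolic singular-perturbation literature for the sharp remainder and compatibility analysis.
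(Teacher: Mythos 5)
The paper does not actually prove this proposition: it is posited as an \emph{assumption} on the exact solution, and the only justification given is the one-line remark that Shishkin established the existence of such a decomposition under smoothness and strong compatibility conditions on the data (see \cite{Roos2008}, Part III, Chapter 4). Your sketch --- reduced solution transported along characteristics entering at $x=0$, $y=0$, edge layers in the stretched variables $(1-x)/\varepsilon$ and $(1-y)/\varepsilon$, a corner layer at $(1,1)$, and a comparison-principle/barrier bound for the truncated remainder --- is precisely the standard construction underlying the results the paper cites, and you correctly identify the genuinely hard point in the parabolic setting, namely the compatibility conditions between $u_0$, $f$ and the boundary data that are needed for $\varepsilon$-uniform bounds on the correctors. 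Two caveats are worth recording. First, the leading layer equation should be $\partial_\xi^2 E + \af(1,y)\,\partial_\xi E = 0$ rather than one with the constant $\alpha_1$; the decay rate is $\af(1,y)\ge \alpha_1$, and the bound $e^{-\alpha_1(1-x)/\varepsilon}$ in \eqref{reg:u:2d} is obtained only after weakening via \eqref{assumption:coef:2d}. Second, the derivative bounds on the remainder do not follow merely by differentiating the equation and repeating the barrier argument: obtaining $\varepsilon$-uniform estimates for $\partial_x^i\partial_y^j\partial_t^m R$ requires carefully rescaled interior estimates and is exactly where the strong compatibility hypotheses (and the restriction $i+j+2m\le l$, reflecting parabolic scaling in time) enter; this is the part that is nontrivial even in the stationary case and that you, like the paper, ultimately defer to the literature. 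So your proposal is a reasonable account of why the assumed decomposition is plausible rather than a self-contained proof --- which is consistent with the status the statement has in the paper, but you should present it as such.
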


Shishkin proved the existence of such a decomposition under certain conditions (smoothness of the
data, strong compatibility), see also \cite{Roos2008}, Part III, Chapter 4.

Our class of layer-adapted meshes is now constructed as follows.
For the notational simplification,
we assume that $\alpha_1=\alpha_2=\alpha$, $N_x=N_y=N$.
Let $N\geq 2$ be an even integer. We introduce the mesh points
\[
0=x_0<x_1<\cdots<x_{N-1}<x_{N}=1,\quad
0=y_0<y_1<\cdots<y_{N-1}<y_{N}=1,
\]
and consider a tensor-product mesh with mesh points $(x_i,y_j)$.
Since both meshes have the same structure we only describe the mesh in $x$-direction.

The mesh is equidistant on $[0,x_{N/2}]$ with $N/2$ elements,
but gradually divided on $[x_{N/2},1]$ with $N/2$ elements, where $x_{N/2}=1-\tau$ with
\begin{equation}\label{condition:parameters}
\tau = \min\Big\{\frac12, \frac{\sigma\varepsilon}{\alpha}\varphi\big(\frac12\big)\Big\}.
\end{equation}
Here $\sigma>0$ is a user-chosen parameter and the function $\varphi$ satisfies
\begin{align}\label{phi:assumption}
\varphi(0)=0,\quad \varphi^{\prime}>0,\quad \varphi^{\prime\prime}\geq0.
\end{align}

Assume that $\varepsilon\leq N^{-1}$ throughout the paper
and it is so small that \eqref{condition:parameters}
can be replaced by $\tau =\alpha^{-1}\sigma\varepsilon\varphi\big(1/2\big)$
as otherwise the problem can be analyzed in a classical manner.

The mesh points are given by $x_i=\lambda(i/N)$ ($i=0,1,\dots,N$) with the mesh generating function
\begin{equation}\label{layer-adapted}
\lambda(t)=
\begin{dcases}
2(1-\tau)t,
&\textrm{for}%\; t_j=\frac{j}{N},
\; t\in[0, \frac12],
\\
1-\frac{\sigma \varepsilon}{\alpha}\varphi(1-t),
& \textrm{for}\; t\in[\frac12, 1].
\end{dcases}
\end{equation}

Introduced in \cite{Roos1999}, meshes with $\varphi(1/2)=\ln N$ are called Shishkin-type meshes (S-type),
meshes with $\varphi(1/2)=\ln (1/\varepsilon)$ are Bakhvalov-type meshes (B-type). Remark that the original
Bakhvalov mesh has a continuously differentiable mesh generating function.

In the analysis of numerical methods on our class of meshes the so called mesh characterizing function $\psi$,
defined by $\psi=e^{-\varphi}$, plays an important role.

In Table \ref{table:functions}, we list three often used  layer-adapted meshes,
 Shishkin-meshes (S-mesh), Bakhvalov-Shishkin meshes (BS-mesh)
 and a Bakhvalov-type mesh (B-type mesh) together with $\psi$ and the important quantity $\max|\psi^{\prime}|$,
 which arises in error estimates.
Figure \ref{figure:functions} illustrates
the generating functions on these meshes and the generated meshes,
when $\varepsilon=10^{-2}$ and $N=16$ are chosen.
For a survey concerning layer-adapted meshes see also \cite{Linss2003book}.

\begin{table}[ht]
\caption{Three layer-adapted meshes.}
\label{table:functions}
%\tiny
%\scriptsize
\footnotesize
%\small
%\normalsize
%\renewcommand{\arraystretch}{1.2}
\centering
\begin{tabular}{ccccc}
%\hline
\toprule
& S-mesh
& BS-mesh
& B-type mesh
\\
%\hline
\midrule
$\varphi(t)$             & $2t\ln N$    & $-\ln\big[1-2(1- N^{-1})t\big]$  &$-\ln\big[1-2(1-\varepsilon)t\big]$   \\
$\psi(t)$ & $N^{-2t}$    & $1-2(1-N^{-1})t$   &$1-2(1-\varepsilon)t$ \\
%$\psi_\ell(1/4)$ & $N^{-1}$     & $ N^{-1}$ & $N^{-1}$   &$\mu_\ell^{-1}$ & $\mu_\ell^{-1}$  \\
$\max|\psi^{\prime}|$ & $C\ln N$       & $C$ & $C$
\\
\bottomrule
\end{tabular}
\end{table}

\begin{figure}[htp]
\centering
\caption{\small
Three mesh generating functions (left) and the meshes generated (right).}
\includegraphics[width=3.2in,height=2.6in]{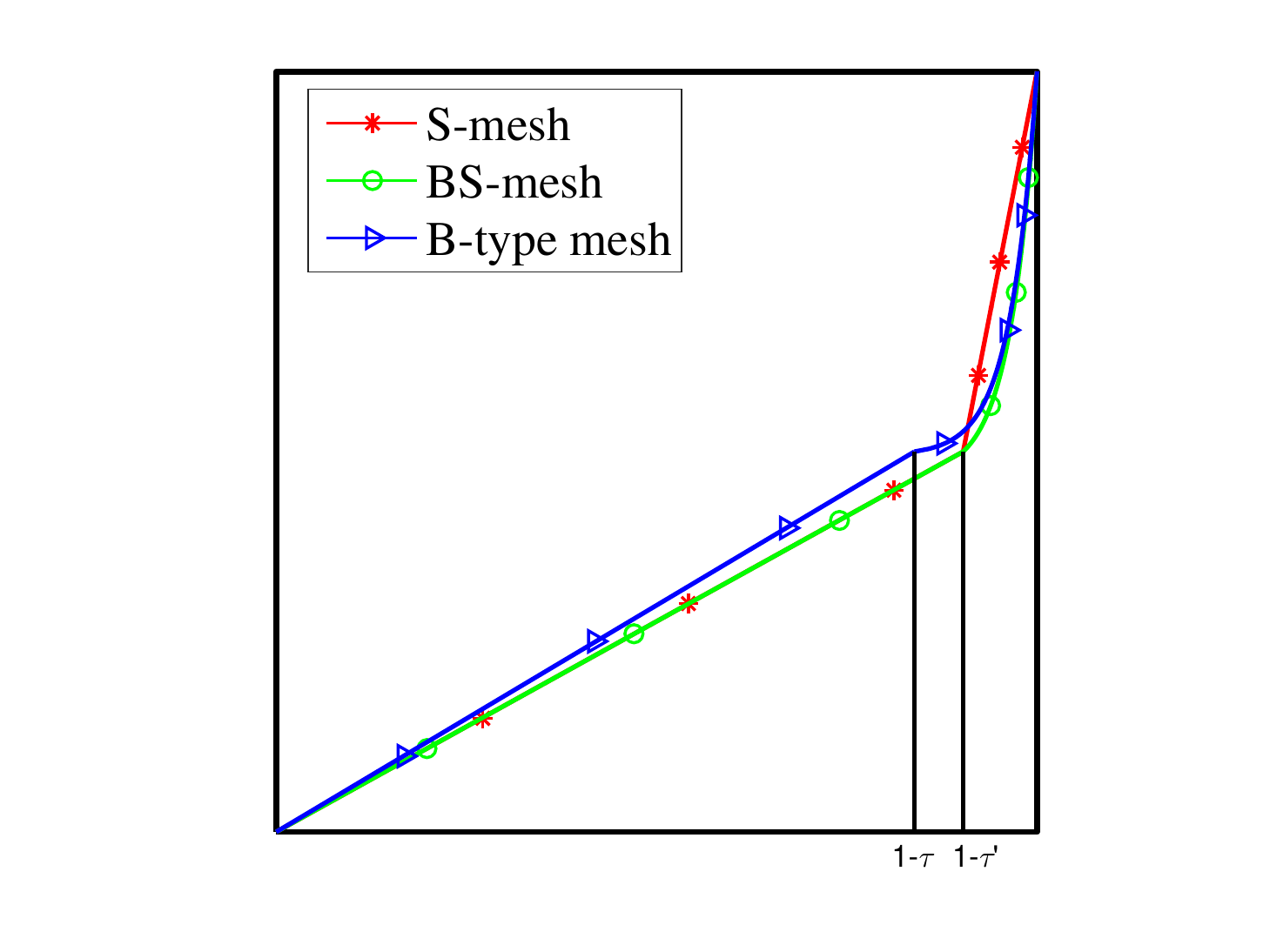}
\includegraphics[width=2.7in,height=2.6in]{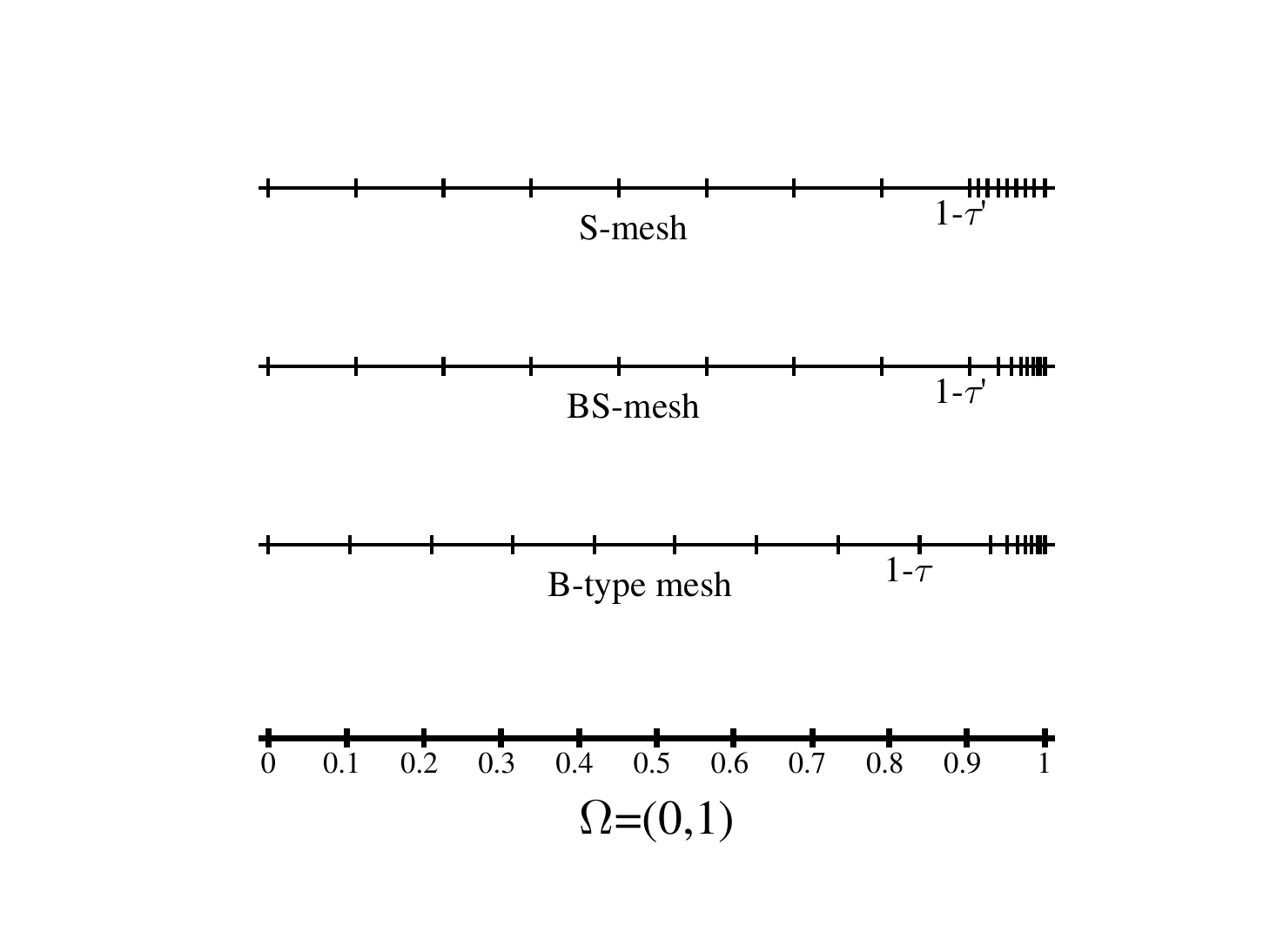}
\label{figure:functions}
\end{figure}

As mentioned already, the  two-dimensional layer-adapted mesh is obtained by the tensor product principle.
We define the following subregions:
\begin{align*}
\Omega_{11}=&\;(0,1-\tau)\times(0,1-\tau),
\quad
\Omega_{21}=(1-\tau,1)\times(0,1-\tau),
\\
\Omega_{12}=&\;(0,1-\tau)\times(1-\tau,1),
\quad
\Omega_{22}=(1-\tau,1)\times(1-\tau,1).
\end{align*}
Obviously, the elements in $\Omega_{11}$ are uniform with a mesh size
of order $N^{-1}$,
while the elements in $\Omega_{12}$ and $\Omega_{21}$
are highly anisotropic.

In the sequel, we state two preliminary lemmas,
which reflect the smallness of a weighted layer-function
and the information about the mesh size for
the  meshes considered.
Since $h_{x,i}=h_{y,i}$, $i=1,2,\dots,N$,
we simple use $h_i$ to represent one of them.

\begin{lemma}\label{lemma:1}
Denote by
$\Theta_i= \min\big\{h_i/\varepsilon,1\big\}e^{-\alpha (1-x_{i})/\sigma\varepsilon}$
for $i=N/2+1,\dots,N$.
There exists a constant $C>0$ independent of $\varepsilon$ and $N$ such that
\begin{subequations}\label{smallness:layer}
\begin{align}
\label{GJR}
\max_{N/2+1\leq i\leq N}\Theta_i \leq &\; CN^{-1}\max|\psi^{\prime}|,
\\
\label{GJR:2}
\sum_{i=N/2+1}^{N}\Theta_i       \leq &\;  C.
\end{align}
\end{subequations}
\end{lemma}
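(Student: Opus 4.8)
The plan is to analyze the quantity $\Theta_i = \min\{h_i/\varepsilon, 1\} e^{-\alpha(1-x_i)/\sigma\varepsilon}$ separately on the two regions where the mesh has different character, and to exploit the explicit form of the mesh-generating function $\lambda$ from \eqref{layer-adapted}. Recall that for $i \geq N/2$ we have $x_i = \lambda(i/N) = 1 - \frac{\sigma\varepsilon}{\alpha}\varphi(1 - i/N)$, so that $1 - x_i = \frac{\sigma\varepsilon}{\alpha}\varphi(1 - i/N)$ and hence the exponential weight simplifies to $e^{-\alpha(1-x_i)/\sigma\varepsilon} = e^{-\varphi(1-i/N)} = \psi(1 - i/N)$, using the mesh characterizing function $\psi = e^{-\varphi}$. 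This identity is the crucial bridge: it converts the layer exponential into a value of $\psi$, whose increments are controlled by $\max|\psi'|$.

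First I would treat the prefactor $\min\{h_i/\varepsilon, 1\}$. Writing $t_i = i/N$, the mesh width is $h_i = x_i - x_{i-1} = \lambda(t_i) - \lambda(t_{i-1})$. By the mean value theorem there is $\xi_i \in (t_{i-1}, t_i)$ with $h_i = \lambda'(\xi_i)/N = \frac{\sigma\varepsilon}{\alpha}\varphi'(1 - \xi_i)\cdot N^{-1}$, so $h_i/\varepsilon = \frac{\sigma}{\alpha} N^{-1}\varphi'(1-\xi_i)$. Thus the estimate reduces to controlling products of the form $N^{-1}\varphi'(1-\xi_i)\,\psi(1-t_i)$, and since $\psi' = -\varphi'\psi$, we have $\varphi'(1-\xi_i)\psi(1-\xi_i) = -\psi'(1-\xi_i) = |\psi'(1-\xi_i)|$ (as $\psi$ is decreasing). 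The convexity hypothesis $\varphi'' \geq 0$ and the monotonicity $\varphi' > 0$ from \eqref{phi:assumption} let me compare $\psi(1-t_i)$ with $\psi(1-\xi_i)$ on the same mesh interval so that the product $N^{-1}\varphi'(1-\xi_i)\psi(1-t_i)$ is bounded by $C N^{-1}|\psi'(1-\xi_i)| \leq C N^{-1}\max|\psi'|$.

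For \eqref{GJR}, I would bound $\Theta_i \leq \min\{h_i/\varepsilon,1\}\,\psi(1-t_i)$ pointwise. When $\min$ selects $h_i/\varepsilon$, the argument above gives $\Theta_i \leq C N^{-1}\max|\psi'|$ directly. When $\min$ selects $1$, this happens only where $h_i \geq \varepsilon$, i.e. far enough from $x = 1$ that the layer factor $\psi(1-t_i)$ is already small; here I would show $\psi(1-t_i) \leq C N^{-1}\max|\psi'|$ by again invoking $\psi(1-t_i) \leq$ (an increment of $\psi$ across a mesh interval) $\leq N^{-1}\max|\psi'|$, using that $\psi$ tends to a small value near the end of the layer region. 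Taking the maximum over $i$ then yields the first claim. For \eqref{GJR:2}, I would sum the bound $\Theta_i \leq C N^{-1}|\psi'(1-\xi_i)|$ and recognize the sum $\sum_i N^{-1}|\psi'(1-\xi_i)|$ as a Riemann sum for $\int_0^{1/2}|\psi'(s)|\,ds = |\psi(0) - \psi(1/2)| \leq \psi(0) = 1$; the monotonicity of $\psi'$ (from $\varphi'' \geq 0$, so $\psi$ is monotone in its derivative) ensures the Riemann sum is uniformly bounded by the integral up to a constant, giving $\sum_i \Theta_i \leq C$.

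The main obstacle I anticipate is handling the case distinction in $\min\{h_i/\varepsilon, 1\}$ cleanly and uniformly across all three meshes in Table \ref{table:functions}, since the transition index where $h_i$ crosses $\varepsilon$ differs between S-type and B-type meshes; the convexity assumption $\varphi'' \geq 0$ is exactly what is needed to compare $\varphi'$ at the intermediate point $\xi_i$ with $\psi$ at the nodal point, and making this comparison rigorous (rather than waving at monotonicity) is the delicate step. The summation bound \eqref{GJR:2} is comparatively routine once the Riemann-sum interpretation is set up, but care is needed because $\varphi'$ may blow up near the layer for B-type meshes, so the telescoping/integral comparison must be justified via the \emph{bounded} quantity $\psi$ rather than $\varphi'$ itself.
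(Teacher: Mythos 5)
Your reduction of the exponential to $\psi(1-t_i)$ and your instinct to control everything through increments of $\psi$ rather than through $\varphi'$ are exactly right, and they match the paper's starting point. But the central step of your argument has a genuine gap. To turn $N^{-1}\varphi'(1-\xi_i)\,\psi(1-t_i)$ into $CN^{-1}|\psi'(1-\xi_i)|$ you need the ratio bound $\psi(1-t_i)\leq C\,\psi(1-\xi_i)$, worst case $\psi(1-t_i)\leq C\,\psi(1-t_{i-1})$, and you attribute this to the convexity hypothesis $\varphi''\geq 0$. That hypothesis does not deliver it: convexity controls the growth of $\varphi'$, not the ratio of $\psi$ across a cell, and the bound is in fact \emph{false} in general. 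On the B-type mesh at the transition element $i=N/2+1$ one has $\psi(1/2)=\varepsilon$ and $\psi(1/2-1/N)=\varepsilon+2(1-\varepsilon)N^{-1}$, so the ratio is $1+2(1-\varepsilon)/(N\varepsilon)$, which is unbounded as $\varepsilon\to 0$. The same objection applies to the "monotonicity of $\psi'$" you invoke for the Riemann-sum step: $\varphi''\geq 0$ gives $\psi''=(\varphi'^2-\varphi'')\psi$ of indeterminate sign. What actually rescues your case split is an observation you do not make: the branch condition itself supplies the ratio bound, since $h_i\leq\varepsilon$ is \emph{equivalent} to $\varphi(1-t_{i-1})-\varphi(1-t_i)\leq\alpha/\sigma$, i.e.\ to $\psi(1-t_i)/\psi(1-t_{i-1})\leq e^{\alpha/\sigma}$; and conversely $h_i\geq\varepsilon$ forces $\psi(1-t_{i-1})\leq e^{-\alpha/\sigma}\psi(1-t_i)$, hence $\psi(1-t_i)\leq(1-e^{-\alpha/\sigma})^{-1}\bigl[\psi(1-t_i)-\psi(1-t_{i-1})\bigr]\leq CN^{-1}\max|\psi'|$. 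With that repair both branches close, and \eqref{GJR:2} follows because in either branch $\Theta_i\leq C[\psi(1-t_i)-\psi(1-t_{i-1})]$ telescopes; note that \eqref{GJR:2} genuinely cannot be obtained by summing \eqref{GJR}, which would only give $C\max|\psi'|=C\ln N$ on the S-mesh.

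For comparison, the paper avoids the case analysis, the mean value theorem and the Riemann sums entirely. It writes $h_i=\frac{\sigma\varepsilon}{\alpha}\int_{1-t_i}^{1-t_{i-1}}(-\psi'(s)/\psi(s))\,ds\leq\frac{\sigma\varepsilon}{\alpha}\,\frac{\psi(1-t_i)-\psi(1-t_{i-1})}{\psi(1-t_{i-1})}$ using only that $\psi$ is decreasing, and then applies the elementary inequality $\min\{1+x,1+1/x\}\leq 2$ to conclude $\Theta_i\leq C\bigl[\psi(1-t_i)-\psi(1-t_{i-1})\bigr]$ in one stroke; \eqref{GJR} follows since each increment is at most $N^{-1}\max|\psi'|$, and \eqref{GJR:2} by telescoping. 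You would do well to adopt that single unified bound: it is exactly the quantity your two repaired branches each produce, obtained without ever needing to know which term the minimum selects.
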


\begin{proof}
Let $t_i=i/N$. For any $i=N/2+1,\dots,N$, one has
\begin{align*}
e^{-\frac{\alpha (1-x_{i})}{\sigma\varepsilon}}
=&\; e^{-\varphi(1-t_{i})}= e^{\ln \psi(1-t_{i})} = \psi(1-t_{i}),
\\
h_i=&\;
\frac{\sigma \varepsilon}{\alpha}\big[\varphi(1-t_{i-1})-\varphi(1-t_{i})\big]
=\frac{\sigma \varepsilon}{\alpha}\int_{1-t_{i}}^{1-t_{i-1}} \varphi^{\prime}(s)ds
=\frac{\sigma \varepsilon}{\alpha}\int_{1-t_{i}}^{1-t_{i-1}}- \frac{\psi^{\prime}(s)}{\psi(s)}ds
\nonumber\\
\leq &\;
\frac{\sigma \varepsilon}{\alpha}\frac{\int_{1-t_{i}}^{1-t_{i-1}} -\psi^{\prime}(s)ds}{\psi(1-t_{i-1})}
=\frac{\sigma \varepsilon}{\alpha}\cdot\frac{\psi(1-t_{i})-\psi(1-t_{i-1})}{\psi(1-t_{i-1})},
\end{align*}
where we used the monotone decrease property
of $\psi=e^{-\varphi}$ due to $\varphi^{\prime}>0$.
Therefore, one obtains
\begin{align*}
\Theta_i
\leq&\;
C\min\Bigg\{\frac{\psi(1-t_{i})-\psi(1-t_{i-1})}{\psi(1-t_{i-1})},1\Bigg\}\psi(1-t_{i})
\nonumber\\
=&\;
C\min\Bigg\{1+\frac{\psi(1-t_{i})-\psi(1-t_{i-1})}{\psi(1-t_{i-1})},
1+\frac{\psi(1-t_{i-1})}{\psi(1-t_{i})-\psi(1-t_{i-1})}\Bigg\}
\nonumber\\
&\;\hspace{1cm}\cdot\Big[\psi(1-t_{i})-\psi(1-t_{i-1})\Big]
\nonumber\\
\leq &\; C\Big[\psi(1-t_{i})-\psi(1-t_{i-1})\Big],
\end{align*}
due to the trivial inequality $1<\min\{1+1/x,1+x\}\leq 2$ for $x>0$.
This leads immediately to \eqref{smallness:layer}.
\end{proof}

\begin{lemma}\label{lemma:2}
For our three types of layer-adapted meshes we have\\
$h_{1}=h_2=\dots=h_{N/2}$ and
$\hbar\geq C\varepsilon N^{-1}\max|\psi^{\prime}|$.
Furthermore,
\begin{align}
\label{mesh:property:S}
h_{N/2+1}=h_{N/2+2}=\dots=&\;h_{N},
&\quad& \textrm{for \; a\; S-mesh},
\\
\label{mesh:property:BS}
i= N/2+1,\cdots,N,\qquad
1\geq \frac{h_{i+1}}{h_{i}}\geq&\; C,
&\quad& \textrm{for \; a\; BS-mesh},
\\
\label{mesh:property:B:1}
i= N/2+2,\cdots,N,\qquad
1\geq \frac{h_{i+1}}{h_{i}}\geq&\; C,
&\quad& \textrm{for \; a\; B-type mesh},
\\
\label{mesh:property:B}
i= 1,2,\cdots,N/2,\qquad
h_{N/2+i}\geq&\; \frac{\sigma\varepsilon}{\alpha}\frac{1}{i+1},
&\quad& \textrm{for \; a\; B-type mesh},
\end{align}
where $C>0$ is a constant independent of $\varepsilon$ and $N$.
\end{lemma}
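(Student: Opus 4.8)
The plan is to reduce every assertion to elementary properties of the mesh-generating function $\lambda$ from \eqref{layer-adapted}, exploiting that in the layer region the width of the $j$-th fine element admits, as derived in the proof of Lemma \ref{lemma:1}, the representation
\begin{equation*}
h_{N/2+j} = \frac{\sigma\varepsilon}{\alpha}\big[\varphi(1-t_{N/2+j-1}) - \varphi(1-t_{N/2+j})\big] = \frac{\sigma\varepsilon}{\alpha}\ln\frac{\psi(1-t_{N/2+j})}{\psi(1-t_{N/2+j-1})},
\end{equation*}
with $t_i = i/N$ and $\psi = e^{-\varphi}$. Two claims are then immediate. Since $\lambda$ is linear on $[0,1/2]$, every coarse element has width $2(1-\tau)/N$, giving $h_1 = \cdots = h_{N/2}$; and since $\varphi(t) = 2t\ln N$ is linear for the S-mesh, the bracketed difference equals $2N^{-1}\ln N$ for every $j$, which is \eqref{mesh:property:S}.

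For the bound $\hbar \geq C\varepsilon N^{-1}\max|\psi'|$ I would argue by region. In the coarse part all widths equal $2(1-\tau)/N \geq N^{-1}$, which dominates $C\varepsilon N^{-1}\max|\psi'|$ because $\varepsilon\max|\psi'| \leq C\varepsilon\ln N \to 0$ under the standing assumption $\varepsilon \leq N^{-1}$. In the layer the monotonicity established below shows the smallest width is the last one, $h_N = (\sigma\varepsilon/\alpha)\varphi(1/N)$ (using $\psi(0)=1$). For the S-mesh this equals $(\sigma/\alpha)\varepsilon N^{-1}\max|\psi'|$ outright; for the BS- and B-type meshes the elementary inequality $-\ln(1-x) \geq x$ applied to $\psi(1/N)$ from Table \ref{table:functions} yields $h_N \geq C\varepsilon N^{-1}$, which is enough since there $\max|\psi'| \leq C$.

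The two-sided ratio bounds \eqref{mesh:property:BS} and \eqref{mesh:property:B:1} are the technical heart. For the upper bound $h_{i+1}/h_i \leq 1$ I would write $h_i = (\sigma\varepsilon/\alpha)\int_{1-t_i}^{1-t_{i-1}}\varphi'(s)\,ds$ and note that the integration window for $h_{i+1}$ is the left-translate by $N^{-1}$ of that for $h_i$; since $\varphi'' \geq 0$ in \eqref{phi:assumption} makes $\varphi'$ nondecreasing, the integral cannot increase, so the fine widths are monotone. For the lower bound I would exploit that $\psi$ is \emph{affine} on the BS- and B-type meshes, so that $P_j := \psi(1-t_{N/2+j})$ is arithmetic with common difference $d$ and $h_{N/2+j} = (\sigma\varepsilon/\alpha)\ln(1+y_j)$, $y_j = d/P_{j-1}$. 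The key identity $1/y_{j+1} = 1/y_j + 1$ collapses the ratio to a single-variable function,
\begin{equation*}
\frac{h_{N/2+j+1}}{h_{N/2+j}} = \frac{\ln\frac{1+2y_j}{1+y_j}}{\ln(1+y_j)} =: g(y_j),
\end{equation*}
which extends continuously to $g(0)=1$ and is strictly positive, hence bounded below by a positive constant on any bounded interval. For the BS-mesh $y_j \leq d/P_0 = 2(1-N^{-1}) < 2$ for all $j \geq 1$, so $g(y_j) \geq g(2) > 0$; for the B-type mesh $y_j \leq 1$ holds exactly when $j \geq 2$, giving $g(y_j) \geq g(1) > 0$, which is precisely why \eqref{mesh:property:B:1} must start at $i = N/2+2$. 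I expect this lower ratio bound to be the main obstacle: the constant is forced to be independent of $\varepsilon$ and $N$ only through the hidden self-map $y \mapsto y/(1+y)$, and one must isolate the single anomalous element at the transition, where $y_1 = 2(1-\varepsilon)/(N\varepsilon)$ may blow up and $g(y_1)\to 0$.

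It remains to establish the near-transition coarseness \eqref{mesh:property:B} of the B-type mesh. With $P_j = \varepsilon + jc$, $c = 2(1-\varepsilon)/N$, I would write $h_{N/2+i} = (\sigma\varepsilon/\alpha)\ln\big(1 + c/(\varepsilon + (i-1)c)\big)$ and apply $\ln(1+x) \geq x/(1+x)$ to get $h_{N/2+i} \geq (\sigma\varepsilon/\alpha)\,c/(\varepsilon+ic)$. Since $\varepsilon \leq N^{-1}$ gives $c = 2(1-\varepsilon)/N \geq \varepsilon$, one has $\varepsilon + ic \leq (i+1)c$, whence $h_{N/2+i} \geq (\sigma\varepsilon/\alpha)/(i+1)$, which is \eqref{mesh:property:B}. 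Assembling the pieces proves the lemma.
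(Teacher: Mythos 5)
Your proof is correct and complete, but it is genuinely more detailed than the paper's, which disposes of the ratio bounds \eqref{mesh:property:BS}--\eqref{mesh:property:B:1} with the remark that they ``can be verified easily, see also \cite{Roos2012}'' and only writes out the computation for \eqref{mesh:property:B}. For that last estimate your argument coincides with the paper's (same arithmetic progression $\varepsilon+jc$, same use of $\ln(1+x)\geq x/(1+x)$ and $N\varepsilon\leq 1\leq 2(1-\varepsilon)$, merely applied in a slightly different order). Where you diverge usefully: (i) for the ratio bounds you actually supply a proof --- the convexity/translated-window argument for $h_{i+1}\leq h_i$ and the reduction, via the affine form of $\psi$ and the recursion $1/y_{j+1}=1/y_j+1$, to the single-variable function $g$, which cleanly explains why the B-type bound must exclude $i=N/2+1$ (there $y_1=2(1-\varepsilon)/(N\varepsilon)$ is unbounded and $g(y_1)\to 0$); (ii) for $\hbar\geq C\varepsilon N^{-1}\max|\psi^{\prime}|$ the paper uses the one-line chain $\hbar\geq C\varepsilon N^{-1}\min|\varphi^{\prime}|=C\varepsilon N^{-1}\varphi^{\prime}(0)=C\varepsilon N^{-1}|\psi^{\prime}(0)|=C\varepsilon N^{-1}\max|\psi^{\prime}|$ (mean value theorem plus $\varphi^{\prime\prime}\geq0$ and $\psi^{\prime}=-\varphi^{\prime}e^{-\varphi}$ with $\varphi(0)=0$), whereas you argue region by region with the explicit entries of Table \ref{table:functions}; your route is more pedestrian but arguably safer, since the identity $\max|\psi^{\prime}|=|\psi^{\prime}(0)|$ is a property of the three concrete meshes rather than a consequence of \eqref{phi:assumption} alone. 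One cosmetic point: the step ``$g(y_j)\geq g(2)$'' tacitly assumes $g$ is decreasing; the continuity-plus-compactness argument you also state (positive continuous extension of $g$ to $[0,2]$ with $g(0)=1$) is what actually yields the uniform lower bound, so lean on that rather than on monotonicity of $g$.
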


\begin{proof}
It is obvious that $h_{1}=h_2=\dots=h_{N/2}$.
Moreover, \eqref{mesh:property:S}-\eqref{mesh:property:B:1} can be verified easily, see also \cite{Roos2012}.

By the definition \eqref{layer-adapted}, assumption \eqref{phi:assumption}
and $\psi=e^{-\varphi}$, one has
\begin{align}
\hbar \geq C\varepsilon N^{-1}\min|\varphi^{\prime}|
=C\varepsilon N^{-1}|\varphi^{\prime}(0)|
=C\varepsilon N^{-1}|\psi^{\prime}(0)|
=C\varepsilon N^{-1}\max|\psi^{\prime}|.
\end{align}

Furthermore, on the B-type mesh, it holds
\begin{align*}
h_{N/2+i}=&\;
\frac{\sigma\varepsilon}{\alpha}
\ln\Bigg(\frac{1-2(1-\varepsilon)(1-t_{N/2+i})}{1-2(1-\varepsilon)(1-t_{N/2+i-1})}\Bigg)
=\frac{\sigma\varepsilon}{\alpha}
\ln\Bigg(1+\frac{2(1-\varepsilon)}{N\varepsilon+2(1-\varepsilon)(i-1)}\Bigg)
\\
\geq&\; \frac{\sigma\varepsilon}{\alpha}\ln\Big(1+\frac{1}{i}\Big)
\geq \frac{\sigma\varepsilon}{\alpha}\frac{1}{i+1},
\quad i= 1,2,\cdots,N/2,
\end{align*}
because $0<N\varepsilon\leq 1\leq 2(1-\varepsilon)$
and $\ln(1+x)\geq x/(1+x)$ for any $x>0$.
\end{proof}

\section{A Ritz projection}
\label{sec:projections}

In this section we introduce a Ritz projection
related to the stationary problem
and establish its approximation properties.

For this purpose, we define three two-dimensional local Gauss-Radau projections. To be specific,
for each element $K_{ij}\in \Omega_N$ and any $z\in C(\overline{K}_{ij})$,
projections $\Pi^{-}z,\Pi_x^{+} z,\Pi_y^{+} z\in \mathcal{Q}^k(K_{ij})$
are defined as
\begin{align}\label{global:projection:2d:u}
&
\begin{cases}
\int_{K_{ij}}(\Pi^{-}z) \vphu \textrm{d}x\textrm{d}y
= \int_{K_{ij}}z \vphu \textrm{d}x\textrm{d}y,
\quad &\forall \vphu\in \mathcal{Q}^{k-1}(K_{ij}),
\\
\int_{J_j}(\Pi^{-} z)_{i,y}^{-}\vphu\textrm{d}y
=\int_{J_j} z_{i,y}^{-}\vphu\textrm{d}y,
\quad &\forall \vphu\in \mathcal{P}^{k-1}(J_j),
\\
\int_{I_i}(\Pi^{-} z)_{x,j}^{-}\vphu\textrm{d}x
=\int_{I_i} z_{x,j}^{-}\vphu\textrm{d}x,
\quad &\forall \vphu\in \mathcal{P}^{k-1}(I_i),
\\
(\Pi^{-} z)(x_{i}^{-},y_{j}^{-})=z(x_{i}^{-},y_{j}^{-}).
\end{cases}
\\
&
\label{global:projection:2d:q1}
\begin{cases}
\int_{K_{ij}}(\Pi_x^{+} z) \vphu
\textrm{d}x\textrm{d}y
=
\int_{K_{ij}}z \vphu \textrm{d}x\textrm{d}y,
\quad &\forall \vphu\in \mathcal{P}^{k-1}(I_i)\otimes \mathcal{P}^k(J_j),
\\
\int_{J_j}
(\Pi_x^{+} z)_{i,y}^{+}\vphu
\textrm{d}y
=\int_{J_j}  z_{i,y}^{+}\vphu\textrm{d}y,
\quad &\forall \vphu\in \mathcal{P}^k(J_j).
\end{cases}
\\
\label{global:projection:2d:q2}
&
\begin{cases}
\int_{K_{ij}}(\Pi_y^{+}  z) \vphu
\textrm{d}x\textrm{d}y
=
\int_{K_{ij}}z \vphu \textrm{d}x\textrm{d}y,
\quad &\forall \vphu\in \mathcal{P}^{k}(I_i)\otimes \mathcal{P}^{k-1}(J_j),
\\
\int_{I_i}
(\Pi_y^{+} z)^{+}_{x,j}\vphu
\textrm{d}x
=\int_{I_i}  z^{+}_{x,j}\vphu\textrm{d}x,
\quad &\forall \vphu\in \mathcal{P}^k(I_i).
\end{cases}
\end{align}
%\begin{align}\label{global:projection:2d:u}
%&
%\begin{cases}
%\dual{\Pi^{-}z}{\vphu}_{K_{ij}}=\dual{z}{\vphu}_{K_{ij}},
%\quad &\forall \vphu\in \mathcal{Q}^{k-1}(K_{ij}),
%\\
%\dual{(\Pi^{-} z)_{i,y}^{-}}{\vphu}_{J_j}=\dual{ z_{i,y}^{-}}{\vphu}_{J_j},
%\quad &\forall \vphu\in \mathcal{P}^{k-1}(J_j),
%\\
%\dual{(\Pi^{-} z)_{x,j}^{-}}{\vphu}_{I_i}=\dual{ z_{x,j}^{-}}{\vphu}_{I_i},
%\quad &\forall \vphu\in \mathcal{P}^{k-1}(J_j),
%\\
%(\Pi^{-} z)(x_{i}^{-},y_{j}^{-})=z(x_{i}^{-},y_{j}^{-}).
%\end{cases}
%\\
%&
%\label{global:projection:2d:q1}
%\begin{cases}
%\dual{\Pi_x^{+} z}{\vphu}_{K_{ij}}=\dual{z}{\vphu}_{K_{ij}},
%\quad &\forall \vphu\in \mathcal{P}^{k-1}(I_i)\otimes \mathcal{P}^k(J_j),
%\\
%\dual{(\Pi_x^{+} z)_{i,y}^{+}}{\vphu}_{J_j}=\dual{ z_{i,y}^{+}}{\vphu}_{J_j},
%\quad &\forall \vphu\in \mathcal{P}^k(J_j).
%\end{cases}
%\\
%\label{global:projection:2d:q2}
%&
%\begin{cases}
%\dual{\Pi_y^{+} z}{\vphu}_{K_{ij}}=\dual{z}{\vphu}_{K_{ij}},
%\quad &\forall \vphu\in \mathcal{P}^{k}(I_i)\otimes \mathcal{P}^{k-1}(J_j),
%\\
%\dual{(\Pi_y^{+} z)_{x,j}^{+}}{\vphu}_{I_i}=\dual{ z_{x,j}^{+}}{\vphu}_{I_i},
%\quad &\forall \vphu\in \mathcal{P}^k(I_i).
%\end{cases}
%\end{align}
Using these definitions and Lemma 2.14 of \cite{Apel1999}
we fix the $L^\infty$-stability property
and the approximation error estimates
\begin{subequations}\label{2GR:stb:app}
\begin{align}
\label{2GR:stb}
\norm{\Phi z}_{L^\infty(K_{ij})}\leq &\; C \norm{z}_{L^\infty(K_{ij})},
\\
\label{2GR:app}
\norm{z-\Phi z}_{L^\infty(K_{ij})}\leq &\;
C \big[h_{i}^{k+1}\norm{\partial_x^{k+1}z}_{L^\infty(K_{ij})}
+h_{j}^{k+1}\norm{\partial_y^{k+1}z}_{L^\infty(K_{ij})}
\big],
\end{align}
\end{subequations}
where $\Phi\in\{\Pi^{-},\Pi_x^{+},\Pi_y^{+}\}$.

The next lemma states some  approximation error estimates for
$\zeta_u=u-\Pi^{-}u$, $\zeta_p=p-\Pi_x^{+}p$ and $\zeta_q=q-\Pi_y^{+}q$.

\begin{lemma}\label{lemma:GR}
Let $\sigma\geq k+1$. Then it holds
\begin{subequations}\label{2dGR:property}
\begin{align}
\label{etau:2d}
\norm{\zeta_u}_{\Omega_{11}}\leq CN^{-(k+1)},
\quad
\norm{\zeta_u}_{L^\infty(\Omega\setminus\Omega_{11})}\leq &\;C(N^{-1}\max|\psi^{\prime}|)^{k+1},
\\
\label{etau:2d:1}
\sum_{j=1}^{N}\norm{(\zeta_u)_{N,y}^{-}}^2_{J_j}
+\sum_{i=1}^{N}\norm{(\zeta_u)_{x,N}^{-}}^2_{I_i}
\leq &\; C(N^{-1}\max|\psi^{\prime}|)^{2(k+1)},
\\
\label{etau:2d:2}
\sum_{i=1}^{N}\sum_{j=1}^{N}
\Big[
\norm{(\zeta_u)_{i,y}^{-}}_{J_j}^2
+\norm{(\zeta_u)_{x,j}^{-}}_{I_i}^2\Big]
\leq &\; C(N^{-1}\max|\psi^{\prime}|)^{2k+1},
\\
\label{etau:2d:3}
\sum_{j=1}^{N}\sum_{i=0}^{N}
\dual{1}{\jump{\zeta_u}^2_{i,y}}_{J_j}
+\sum_{i=1}^{N}\sum_{j=0}^{N}
\dual{1}{\jump{\zeta_u}^2_{x,j}}_{I_i}
\leq &\; C(N^{-1}\max|\psi^{\prime}|)^{2k+1},
\\
\label{etap:2d:1}
\varepsilon^{-\frac12}\norm{\zeta_q}
+\varepsilon^{-\frac12}\norm{\zeta_p}\leq &\; C(N^{-1}\max|\psi^{\prime}|)^{k+1},
\\
\label{etap:2d:2}
\sum_{i=1}^{N}\norm{(\zeta_q)_{x,N}^{-}}^2_{I_i}
+\sum_{j=1}^{N}\norm{(\zeta_p)_{N,y}^{-}}^2_{J_j}
\leq &\; C(N^{-1}\max|\psi^{\prime}|)^{2(k+1)},
\end{align}
\end{subequations}
where %$K_{ij}\subset \Omega\setminus\Omega_{11}$
$C>0$ is independent of $\varepsilon$ and $N$.
\end{lemma}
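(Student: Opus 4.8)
The plan is to reduce every estimate to the solution decomposition of Proposition~\ref{thm:reg:2d} combined with the pointwise projection properties \eqref{2GR:stb}--\eqref{2GR:app}, and then to carry out an element-wise summation driven by the two mesh bounds \eqref{GJR}--\eqref{GJR:2} of Lemma~\ref{lemma:1}. Since $\Pi^{-},\Pi_x^{+},\Pi_y^{+}$ are linear I would write $u=S+E_{21}+E_{12}+E_{22}$ and bound the projection error of each piece separately, and likewise split $p=\varepsilon u_x$ and $q=\varepsilon u_y$. For the smooth part $S$ all derivatives are $O(1)$, so \eqref{2GR:app} with the uniform size $O(N^{-1})$ in $\Omega_{11}$ (and $h_i,h_j\le CN^{-1}$ elsewhere) gives contributions of order $N^{-(k+1)}\le(N^{-1}\max|\psi'|)^{k+1}$. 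On $\Omega_{11}$ the layer components are exponentially small: there $1-x\ge\tau$ forces $e^{-\alpha(1-x)/\varepsilon}\le\psi(1/2)^{\sigma}$, and the hypothesis $\sigma\ge k+1$ turns this into $\le(N^{-1}\max|\psi'|)^{k+1}$ on an S-mesh (respectively $\le\varepsilon^{k+1}\le N^{-(k+1)}$ on a B-type mesh). This settles the $\Omega_{11}$ part of \eqref{etau:2d}.

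The core is the layer subregions. For $E_{21}$ on an element $K_{ij}$ with $i>N/2$, the approximation bound \eqref{2GR:app} produces the factor $(h_i/\varepsilon)^{k+1}e^{-\alpha(1-x_i)/\varepsilon}$, whereas the stability bound \eqref{2GR:stb} gives merely $e^{-\alpha(1-x_i)/\varepsilon}$; taking the smaller of the two yields $\min\{h_i/\varepsilon,1\}^{k+1}e^{-\alpha(1-x_i)/\varepsilon}$. Here $\sigma\ge k+1$ is used decisively: since $(k+1)/\sigma\le1$ one may replace $e^{-\alpha(1-x_i)/\varepsilon}$ by $e^{-(k+1)\alpha(1-x_i)/(\sigma\varepsilon)}$, so that the whole expression is bounded by $\Theta_i^{k+1}$ (and $E_{12}$ is treated symmetrically, the corner term $E_{22}$ giving a product $\Theta_i^{k+1}\Theta_j^{k+1}$). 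From this point each bound is a summation of powers of $\Theta_i$. The $L^\infty$ estimate in \eqref{etau:2d} and the single outflow-line sums \eqref{etau:2d:1} and \eqref{etap:2d:2} only need $\max_i\Theta_i\le CN^{-1}\max|\psi'|$ from \eqref{GJR} (plus $\sum_j|J_j|\le1$ to pass from the $L^\infty$ bound on one column to the $L^2$ trace), which reproduces the full orders $(k+1)$ and $2(k+1)$. The half order is lost precisely in the full interface sums \eqref{etau:2d:2}--\eqref{etau:2d:3}: writing $\Theta_i^{2(k+1)}\le(\max_i\Theta_i)^{2k+1}\,\Theta_i$ and invoking $\sum_i\Theta_i\le C$ from \eqref{GJR:2} (equivalently, summing the $N$ copies of the $\Omega_{11}$ bound $N^{-2(k+1)}$) produces exactly the reduced exponent $2k+1$.

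For the flux estimates I would exploit $\zeta_p=\varepsilon(u_x-\Pi_x^{+}u_x)$, so the $\varepsilon$ prefactor cancels the $\varepsilon^{-1}$ that each $x$-derivative of the layer contributes, leaving the layer part of $p$ with $O(1)$ amplitude. For \eqref{etap:2d:2}, which lives on the single outflow line $x=1$ (there the exponential equals one and $\Theta_N=\min\{h_N/\varepsilon,1\}$), the $L^\infty$-on-one-column argument again gives the clean order $2(k+1)$. The genuinely delicate estimate is the weighted global bound \eqref{etap:2d:1}: after the reduction above it amounts to controlling $\varepsilon^{-1}\sum_{i>N/2}h_i\,\Theta_i^{2(k+1)}$ (using $\sum_j h_j\le1$), and this is exactly where a generic application of \eqref{2GR:app} would cost an extra half order on an S-type mesh, because $\sum_i\Theta_i^{2(k+1)}$ does not collapse to its maximal term when the decay ratio of $\Theta_i$ tends to $1$. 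I therefore expect \eqref{etap:2d:1} to be the main obstacle, and to require the finer per-element mesh-size information of Lemma~\ref{lemma:2} together with the modified superconvergence/stability property imported from \cite{Zhu2014}, rather than \eqref{2GR:app} alone; the smooth part and all the $u$-estimates, by contrast, are routine once the $\Theta_i$-mechanism and Lemma~\ref{lemma:1} are in place.
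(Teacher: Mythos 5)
Your handling of \eqref{etau:2d}--\eqref{etau:2d:3} and \eqref{etap:2d:2} is correct and is essentially the paper's argument: the same solution decomposition, the same interplay of the stability bound \eqref{2GR:stb} and the approximation bound \eqref{2GR:app} through a minimum producing powers of $\Theta_i$, and the same division of labour between \eqref{GJR} (full order $k+1$ for pointwise and single-line quantities) and \eqref{GJR:2} (the half-order loss in the global interface sums, via $\Theta_i^{2(k+1)}\le(\max_i\Theta_i)^{2k+1}\Theta_i$). The only difference is cosmetic: for the trace sums the paper exploits the tensor-product structure $\Pi^{-}=\pi_x^{-}\otimes\pi_y^{-}$, so that $(\zeta_u)_{i,y}^{-}$ is a one-dimensional Gauss--Radau error along the line $x=x_i$, while you go through $\norm{(\zeta_u)_{i,y}^{-}}_{J_j}^2\le h_j\norm{\zeta_u}_{L^\infty(K_{ij})}^2$; both routes yield \eqref{etau:2d:1}--\eqref{etau:2d:2}.

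Your paragraph on \eqref{etap:2d:1}, however, misidentifies the difficulty. On an S-mesh the generic argument does not lose half an order: there $h_i=2\sigma\varepsilon\ln N/(\alpha N)$ for every $i>N/2$, so $\varepsilon^{-1}h_i=CN^{-1}\max|\psi^{\prime}|$, and $\varepsilon^{-1}\sum_{i>N/2}h_i\Theta_i^{2(k+1)}\le CN^{-1}\max|\psi^{\prime}|\,(\max_i\Theta_i)^{2k+1}\sum_i\Theta_i\le C(N^{-1}\max|\psi^{\prime}|)^{2(k+1)}$ follows from \eqref{GJR}--\eqref{GJR:2} alone; the factor you were afraid of losing is exactly supplied by $\varepsilon^{-1}h_i$. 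The delicate meshes are rather the BS- and B-type ones, where $h_i\sim\varepsilon$ near the transition point (Lemma~\ref{lemma:2}), so $\varepsilon^{-1}h_i\sim 1$ and the elementwise bound accumulates to $C\ln N\cdot N^{-2(k+1)}$; curing this requires the telescoping inequality $h_i\le C\varepsilon\,[\psi(1-t_i)-\psi(1-t_{i-1})]/\psi(1-t_{i-1})$ already established inside the proof of Lemma~\ref{lemma:1}, combined with $e^{-2\alpha(1-x_i)/\varepsilon}=\psi(1-t_i)^{2\sigma}$ (and, strictly, $\sigma$ somewhat larger than $k+1$, which Theorem~\ref{lemma:ritz:2d} assumes anyway) --- not the superconvergence property \eqref{2d:sup} of \cite{Zhu2014}, which concerns the bilinear forms $\mathcal{D}^{1}_{ij}$ and says nothing about $\norm{\zeta_p}$. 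You also omit the other genuinely non-routine piece of \eqref{etap:2d:1}: on the coarse region the layer contribution cannot be handled by the $L^\infty$-stability \eqref{2GR:stb}, which only gives $\varepsilon^{-1}\norm{\Pi_x^{+}(\varepsilon\partial_xE_{21})}_{\Omega_{11}}^2\le C(N\varepsilon)^{-1}e^{-2\alpha\tau/\varepsilon}$, a bound that blows up as $\varepsilon\to0$ for fixed $N$; one needs an $L^2$-type control of the projected layer exploiting $\int_0^{1-\tau}e^{-2\alpha(1-x)/\varepsilon}\,dx\le C\varepsilon e^{-2\alpha\tau/\varepsilon}$, so that the gained $\varepsilon$ cancels the $\varepsilon^{-1}$. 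Your instinct that \eqref{etap:2d:1} is the one estimate that does not follow ``analogously'' is sound --- the paper simply omits it --- but the obstacle you name is not the real one, and the tool you propose would not remove it.
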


\begin{proof}
Let us first prove \eqref{etau:2d}
based on  the decomposition $u=S+E_{21}+E_{12}+E_{22}$.
Obviously, it holds for the smooth component $S$
due to the approximation property \eqref{2GR:app} and the estimates \eqref{reg:u:2d}.

\begin{subequations}
Next we have to study each layer component separately.
For $K_{ij}\in \Omega_{11}\cup \Omega_{12}$,
one obtains from the $L^\infty$-stability \eqref{2GR:stb}
\begin{align}\label{ineq:0}
\norm{\zeta_{E_{21}}}^2_{L^\infty(K_{ij})}
\leq C \norm{E_{21}}^2_{L^\infty(K_{ij})}
\leq C e^{-2\alpha(1-x_i)/\varepsilon}
\leq C e^{-2\alpha\tau/\varepsilon}
\leq C N^{-2(k+1)},
\end{align}
due to $\sigma\geq k+1$ and $\varphi(1/2)\geq \ln N$.
For $K_{ij}\in \Omega_{21}\cup \Omega_{22}$,
one obtains from the stability  \eqref{2GR:stb}
and the approximation property \eqref{2GR:app}
\begin{align}\label{2dlayer:estimate}
&\norm{\zeta_{E_{21}}}^2_{L^\infty(K_{ij})}
\nonumber\\
&\leq
C\min\Big\{
\norm{E_{21}}^2_{L^\infty(K_{ij})},
h_{i}^{2(k+1)}\norm{\partial_x^{k+1}E_{21}}^2_{L^\infty(K_{ij})}
+h_{j}^{2(k+1)}\norm{\partial_y^{k+1}E_{21}}^2_{L^\infty(K_{ij})}
\Big\}
\nonumber\\
&\leq
C\min\Big\{1,\Big(\frac{h_{i}}{\varepsilon}\Big)^{2(k+1)}+h_{j}^{2(k+1)}\Big\}
e^{-2\alpha(1-x_i)/\varepsilon}
\nonumber\\
&\leq C \Theta_i^{2(k+1)}
\leq  C (N^{-1}\max|\psi^{\prime}|)^{2(k+1)},
\end{align}
where we used \eqref{reg:u:2d}, $h_{i}/\varepsilon\geq CN^{-1}\geq Ch_{j}$,
$\sigma\geq k+1$ and Lemma \ref{lemma:1}.
Similarly, we estimate  $\zeta_{E_{12}}$.

For $K_{ij}\in \Omega_{11}\cup\Omega_{12}\cup \Omega_{21}$,
one also has  $\norm{\zeta_{E_{22}}}^2_{L^\infty(K_{ij})}\leq C N^{-2(k+1)}$.
For $K_{ij}\in \Omega_{22}$, one obtains
\begin{align}
\norm{\zeta_{E_{22}}}^2_{L^\infty(K_{ij})}
&\leq
C\min\Big\{1,\Big(\frac{h_{i}}{\varepsilon}\Big)^{2(k+1)}
+\Big(\frac{h_{j}}{\varepsilon}\Big)^{2(k+1)}\Big\}
e^{-2[\alpha(1-x_i)+\alpha(1-y_j)]/\varepsilon}
\nonumber\\
&\leq
C\Big[ \Theta_i^{2(k+1)}e^{-2\alpha(1-y_j)/\varepsilon}
+e^{-2\alpha(1-x_i)/\varepsilon}\Theta_j^{2(k+1)}\Big]
\nonumber\\
& \leq C (N^{-1}\max|\psi^{\prime}|)^{2(k+1)},
\end{align}
\end{subequations}
where we used a trivial inequality
$\min\{1,a+b\}\leq \min\{1,a\}+\min\{1,b\}$
for any real number $a,b>0$ and Lemma \ref{lemma:1}.
Summarizing, one gets \eqref{etau:2d}.

Next we prove \eqref{etau:2d:1} and \eqref{etau:2d:2}.
Noticing that $\Pi^{-}=\pi_x^{-}\otimes \pi_y^{-}$,
where $\pi_x^{-}$ is one dimensional Gauss-Radau projection
in the $x$-direction (see \eqref{GR:projection}).
Thus, for any $i=1,2,\dots,N$,
one has $(\zeta_u)_{i,y}^{-}=u_{i,y}-\pi_y^{-}(u_{i,y})$,
see \cite{Cockburn2001}.
According to the decomposition $u_{i,y} = S(x_i,y,t) +E(x_i,y,t)$
and the estimates $\left| \partial_y^{j}\partial_t^{m}S(x_i,y,t)\right|\leq C$
and
$\left| \partial_y^{j}\partial_t^{m} E(x_i,y,t)
\right|\leq C\varepsilon^{-j} e^{-\alpha(1-y)/\varepsilon}$,
one easily derives
\[
\sum_{j=1}^{N}\norm{(\zeta_u)_{i,y}^{-}}_{J_j}^2
\leq C\big[N^{-2(k+1)}+N^{-1}(N^{-1}\max|\psi^{\prime}|)^{2k+1}\big]
\]
by using the solution decomposition,
the $L^\infty$-stability and the $L^\infty$-approximation property
of one-dimensional Gauss-Radau projections.
Similarly, one can bound $\sum_{i=1}^{N}\norm{(\zeta_u)_{x,j}^{-}}_{I_i}^2$.
Then \eqref{etau:2d:1} and \eqref{etau:2d:2} follow immediately.

To prove \eqref{etau:2d:3}, one starts from the following inequality:
\begin{align*}
\sum_{j=1}^{N}\sum_{i=0}^{N}
\dual{1}{\jump{\zeta_u}^2_{i,y}}_{J_j}
\leq&\;
2\Big[\sum_{j=1}^{N}\sum_{i=1}^{N}\int_{J_j}[(\zeta_u)^+_{i-1,y}]^2 dy
+\sum_{j=1}^{N}\sum_{i=1}^{N}\int_{J_j}[(\zeta_u)^-_{i,y}]^2 dy
\Big].
\end{align*}
For the first term one notices
\[
\sum_{j=1}^{N}\sum_{i=1}^{N}\int_{J_j}[(\zeta_u)^+_{i-1,y}]^2 dy
\leq
\sum_{j=1}^{N}\sum_{i=1}^{N}h_{j}\norm{\zeta_u}^2_{L^\infty(K_{ij})}
\]
and proceeds as  in \eqref{etau:2d} using \eqref{GJR:2}.
For the second term one uses \eqref{etau:2d:2}.
Similarly $\sum_{i=1}^{N}\sum_{j=0}^{N}\dual{1}{\jump{\zeta_u}^2_{x,j}}_{I_i}$
can be bounded.
Then \eqref{etau:2d:3} follows.

The remaining inequalities of \eqref{2dGR:property}
can be proved analogously, we omit the
 details.

\end{proof}

For any $\bm w=(u,p,q)\in [\mathcal{H}^1(\Omega_N)]^3$, the Ritz projection $\digamma \bm w$ with\\
$ \digamma \bm w=(\digamma_1 u,\digamma_2 p, \digamma_3 q)\in \mathcal{W}_N^3$ is defined as
\begin{equation}\label{Def:Ritz:projection:2d}
B(\digamma \bm w;\vph)= B( \bm w;\vph)
\quad \forall \vph=(\vphu,\vphp,\vphq)\in \mathcal{W}_N^3,
\end{equation}
where $B(\cdot;\cdot)$ is given by \eqref{B:def:2d}.
 The unique existence of $\digamma \bm w$ is easy to verify.
Moreover, based on Lemma \ref{lemma:GR},
we obtain the following result:

\begin{theorem}\label{lemma:ritz:2d}
Let $\sigma\geq k+2$.
There exists a constant $C>0$ independent of $\varepsilon$ and $N$ such that
\begin{align}\label{app:Ritz:projection:2d}
\norm{u_t-\digamma_1 u_t}+\enorm{\bm w-\digamma \bm w}\leq &\; C Q^{\star}N^{-(k+1/2)},
\end{align}
where
\begin{align}\label{bounding:Q}
Q^{\star}
=\begin{cases}
(\ln N)^{k+1}                                  & \textrm{for \; a\; S-mesh},\\
1                                                 & \textrm{for \; a\; BS-mesh},\\
\max\Big\{\sqrt{N^{-1}\ln(1/\varepsilon)},1\Big\} & \textrm{for \; a\; B-type mesh}.
\end{cases}
\end{align}
\end{theorem}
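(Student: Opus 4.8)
The plan is to route the error through the Gauss--Radau interpolant of Lemma \ref{lemma:GR} and to estimate an interpolation part and a discrete part separately. Writing $\Pi^{-}u,\Pi_x^{+}p,\Pi_y^{+}q$ for the three projections, I split
\begin{equation*}
\bm w-\digamma\bm w=\bm\eta+\bm\xi,\qquad \bm\eta:=(\zeta_u,\zeta_p,\zeta_q),\qquad \bm\xi:=(\xi_u,\xi_p,\xi_q):=(\Pi^{-}u-\digamma_1 u,\ \Pi_x^{+}p-\digamma_2 p,\ \Pi_y^{+}q-\digamma_3 q)\in\mathcal{W}_N^3 .
\end{equation*}
By the triangle inequality $\enorm{\bm w-\digamma\bm w}\le\enorm{\bm\eta}+\enorm{\bm\xi}$, so it suffices to bound the two pieces, and throughout I read off $N^{-1}\max|\psi'|$ from Table \ref{table:functions}.

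For the interpolation part I would simply insert the estimates of Lemma \ref{lemma:GR} into the definition \eqref{energy:norm:2d} of the energy norm. The contributions $\varepsilon^{-1}\norm{\zeta_p}^2+\varepsilon^{-1}\norm{\zeta_q}^2$ are handled by \eqref{etap:2d:1}, the weighted volume term $\norm{(b-\af_x/2-\as_y/2)^{1/2}\zeta_u}^2$ by \eqref{etau:2d}, and the interior jump sums by \eqref{etau:2d:3}. The only non-routine contribution is the outflow penalty $\sum_{j}\dual{\lambda_{N,y}}{\jump{\zeta_u}^2_{N,y}}_{J_j}$ with $\lambda_{N,y}=\varepsilon/h_{x,N}$: here I would combine \eqref{etau:2d:1} with the lower mesh bound $\hbar\ge C\varepsilon N^{-1}\max|\psi'|$ of Lemma \ref{lemma:2}, which converts the factor $\varepsilon/h_{x,N}$ into $O(N/\max|\psi'|)$ and keeps the term at order $(N^{-1}\max|\psi'|)^{2k+1}$. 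Collecting everything gives $\enorm{\bm\eta}\le C(N^{-1}\max|\psi'|)^{k+1/2}$, which is bounded by $CQ^{\star}N^{-(k+1/2)}$.

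The discrete part is where the real work lies. Since the energy norm is defined by $\enorm{\bm\xi}^2=B(\bm\xi;\bm\xi)$ and the Ritz relation \eqref{Def:Ritz:projection:2d} gives $B(\bm w-\digamma\bm w;\bm\xi)=0$ for the admissible test triple $\bm\xi\in\mathcal{W}_N^3$, I obtain from $\bm\xi=(\bm w-\digamma\bm w)-\bm\eta$ the identity
\begin{equation*}
\enorm{\bm\xi}^2=B(\bm\xi;\bm\xi)=B(\bm w-\digamma\bm w;\bm\xi)-B(\bm\eta;\bm\xi)=-B(\bm\eta;\bm\xi).
\end{equation*}
Expanding $B(\bm\eta;\bm\xi)=\mathcal{T}_1(\bm\eta;\bm\xi)+\mathcal{T}_2(\zeta_u;\bm\xi)+\mathcal{T}_3(\bm\eta;\xi_u)+\mathcal{T}_4(\zeta_u;\xi_u)$ via \eqref{B:def:2d}, the central point is that the three projections are tuned to the upwind fluxes: the volume moment conditions against $\mathcal{Q}^{k-1}$ together with the outflow-trace and corner conditions defining $\Pi^{-}$ annihilate the bulk of $\mathcal{T}_2(\zeta_u;\bm\xi)$, while the analogous conditions for $\Pi_x^{+}$ and $\Pi_y^{+}$ remove the volume and matched-edge parts of $\mathcal{T}_3(\bm\eta;\xi_u)$. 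Because $\Pi^{-}=\pi_x^{-}\otimes\pi_y^{-}$ is a tensor product, what survives in $\mathcal{T}_2$ and $\mathcal{T}_3$ are genuine superconvergence remainders, in which a one-directional projection error such as $u-\pi_y^{-}u$ is paired against a polynomial of full degree $k$ in the orthogonal variable. I would then bound every surviving term by Cauchy--Schwarz and absorb the $\bm\xi$-factors into $\tfrac12\enorm{\bm\xi}^2$ by Young's inequality, leaving quantities controlled by Lemma \ref{lemma:GR}: the mass term $\varepsilon^{-1}\dual{\zeta_p}{\xi_p}$ pairs $\varepsilon^{-1/2}\norm{\zeta_p}$ against $\varepsilon^{-1/2}\norm{\xi_p}\le\enorm{\bm\xi}$, and the zeroth-order term uses $\norm{\xi_u}\le\beta^{-1/2}\enorm{\bm\xi}$, legitimate because $b-\af_x/2-\as_y/2\ge\beta>0$ by \eqref{assumption:coef:2d}. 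The delicate terms are the convection volume term $\dual{\af\zeta_u}{(\xi_u)_x}$ in $\mathcal{T}_4$ and the surviving superconvergence remainders, since differentiating or taking traces of $\bm\xi$ costs an inverse power of the tiny layer mesh size; here I would invoke the modified superconvergence property and the stability result of \cite{Zhu2014} together with an inverse inequality and the weighted layer bounds \eqref{etau:2d}--\eqref{etau:2d:2} and Lemma \ref{lemma:1}. This is precisely the mechanism that preserves the rate $N^{-(k+1/2)}$ and produces the factor $Q^{\star}$ --- the loss $(\ln N)^{k+1}$ on the S-mesh and the factor $\max\{\sqrt{N^{-1}\ln(1/\varepsilon)},1\}$ on the B-type mesh --- and controlling these remainders uniformly in $\varepsilon$ is the main obstacle of the proof.

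Finally, the time-derivative term requires no new structure. Because $\bm a$ and $b$ are independent of $t$, the form $B$ is autonomous; differentiating \eqref{Def:Ritz:projection:2d} in time shows that $\digamma$ commutes with $\partial_t$, so $\digamma_1 u_t$ is the first component of the Ritz projection of $\bm w_t=(u_t,p_t,q_t)$. Since the decomposition \eqref{reg:u:2d} carries $t$-derivatives and the layer factors are $t$-independent, $u_t$ satisfies the same regularity as $u$; applying the energy estimate just established to $\bm w_t$ and using $\norm{u_t-\digamma_1 u_t}\le\beta^{-1/2}\enorm{\bm w_t-\digamma\bm w_t}$ then yields the bound on the first summand, completing the estimate \eqref{app:Ritz:projection:2d}.
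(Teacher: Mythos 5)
Your treatment of the interpolation part $\enorm{\bm\eta}$ and of the time derivative (differentiating \eqref{Def:Ritz:projection:2d} in $t$) matches the paper. The gap is in the discrete part. You set up the coercivity identity $\enorm{\bm\xi}^2=-B(\bm\eta;\bm\xi)$ and plan to bound every term of $B(\bm\eta;\bm\xi)$ by Cauchy--Schwarz and absorb the $\bm\xi$-factors into $\tfrac12\enorm{\bm\xi}^2$. This cannot work for the convection volume term $\dual{\af\zeta_u}{(\xi_u)_x}+\dual{\as\zeta_u}{(\xi_u)_y}$ in $\mathcal{T}_4$: the energy norm \eqref{energy:norm:2d} contains no control whatsoever of $\nabla\xi_u$ (only unweighted $L^2$ norms of $\xi_u$ and jumps), and an inverse inequality in the layer region costs a factor $h_{x,i}^{-1}\sim(\varepsilon N^{-1}\max|\psi^{\prime}|)^{-1}$ that is not compensated by the available bound $\norm{\zeta_u}_{K_{ij}}\lesssim(h_ih_j)^{1/2}(N^{-1}\max|\psi^{\prime}|)^{k+1}$; one is left with a positive power of $\varepsilon^{-1}$. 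The superconvergence route that works for $\mathcal{T}_2$ also fails here, because \eqref{2d:sup:1} carries the factor $\sqrt{h_j/h_i}\sim\varepsilon^{-1/2}$ which for $\mathcal{T}_2$ is absorbed by the weight $\varepsilon^{-1/2}\norm{\vphp}\leq\enorm{\vph}$, a weight that is not available for the $\vphu$-component.

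This is exactly why the paper does not test with $\bm\delta$ and stop. It introduces the stronger norm $\interleave\cdot\interleave_{\sharp}$ of \eqref{norm:sharp}, which adds $N^{-1}\norm{\bm a\cdot\nabla\vphu}^2_{\Omega_{11}}$ and the weighted gradient terms $\tilde h_{ij}\norm{\nabla\vphu}^2_{K_{ij}}$ outside $\Omega_{11}$, proves the continuity bound \eqref{boundedness:B} of $B(\bm\zeta;\cdot)$ against \emph{this} norm for \emph{arbitrary} test functions, and then closes the argument with the inf-sup stability \eqref{stability:bilinear:2d} (the modified Lemma 4.6 of \cite{Zhu2014}), giving $\enorm{\bm\delta}\leq\interleave\bm\delta\interleave_{\sharp}\leq C\sup_{\vph}B(\bm\zeta;\vph)/\interleave\vph\interleave_{\sharp}$. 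You do mention ``the stability result of \cite{Zhu2014},'' but that result is precisely this inf-sup bound in the $\sharp$-norm; it requires a supremum over test functions distinct from $\bm\xi$ and is structurally incompatible with the identity $\enorm{\bm\xi}^2=-B(\bm\eta;\bm\xi)$ on which your argument rests. To repair the proof you must replace the coercivity-plus-absorption step by the continuity-plus-inf-sup step; the rest of your outline (the role of the Gauss--Radau moment conditions in killing most of $\mathcal{T}_2$ and $\mathcal{T}_3$, the splitting of $\Omega$ into $\Omega_{11}^{\star}$ and its complement to produce $Q^{\star}$) then goes through as you describe.
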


\begin{proof}
We start from $\bm w-\digamma \bm w =\bm \zeta-\bm \delta$ with
\begin{subequations}\label{error:decomposition}
\begin{align}
\bm \zeta &\; = (\zeta_u,\zeta_p,\zeta_q)
= (u-\Pi^{-} u,p-\Pi_x^{+} p,q-\Pi_y^{+} q),
\\
\bm \delta &\; = (\delta_u,\delta_p,\delta_q)
= (\digamma_1 u-\Pi^{-} u,\digamma_2 p-\Pi_x^{+} p,\digamma_3 q-\Pi_y^{+} q)\in \mathcal{W}_N^3.
\end{align}
\end{subequations}
By \eqref{Def:Ritz:projection:2d}, one has
\begin{align}\label{error:equation}
B(\bm \delta;\vph)=B(\bm \zeta;\vph)
\quad \forall \vph=(\vphu,\vphp,\vphq)\in \mathcal{W}_N^3.
\end{align}
Based on \eqref{error:equation},
we  prove \eqref{app:Ritz:projection:2d} in two steps.

(1) We shall show that
\begin{align}\label{boundedness:B}
B(\bm\zeta;\vph)\leq CQ^{\star} N^{-(k+1/2)}\interleave\vph\interleave_{\sharp}
\end{align}
for any test function $\vph\in \mathcal{W}_N^3$, where
\begin{align}\label{norm:sharp}
\interleave\vph\interleave_{\sharp}\equiv&\;
C\Big[\enorm{\vph}^2
+N^{-1}\norm{\bm a\cdot\nabla \vphu}_{\Omega_{11}}^2
+\sum_{K_{ij}\in\Omega\setminus\Omega_{11}}
\tilde{h}_{ij}\norm{\nabla \vphu}_{K_{ij}}^2
\Big]^{\frac12}.
\end{align}
Here $\tilde{h}_{ij}=\min\{\hbar_{ij},\hbar_{i+1,j},\hbar_{i,j+1}\}$
and $\tilde{h}_{G}=\min_{K_{ij}\in G}\tilde{h}_{ij}$.

To get \eqref{boundedness:B}, we  bound each term in
\[
B(\bm \zeta;\vph)=\mathcal{T}_1(\bm \zeta;\vph)+\mathcal{T}_2(\zeta_u;\vph)
+\mathcal{T}_3(\bm \zeta;\vphu)+\mathcal{T}_4(\zeta_u;\vphu).
\]

Using the Cauchy-Schwarz inequality, \eqref{etau:2d} and \eqref{etap:2d:1} yield
\begin{align*}
\mathcal{T}_1(\bm \zeta;\vph)
\leq &\;
C\big(\varepsilon^{-1/2}\norm{\zeta_p}+\varepsilon^{-1/2}\norm{\zeta_q}+\norm{\zeta_u}\big)\enorm{\vph}
\nonumber\\
\leq &\; C(N^{-1}\max|\psi^{\prime}|)^{k+1}
\enorm{\vph}.
\end{align*}

By the definitions \eqref{global:projection:2d:q1}-\eqref{global:projection:2d:q2},
Cauchy-Schwarz inequality and \eqref{etap:2d:2}, one obtains easily
\begin{align}
\mathcal{T}_3(\bm \zeta;\vphu)
=&\;
-\sum_{j=1}^{N}
\dual{(\zeta_p)^{-}_{N,y}}{\vphu^{-}_{N,y}}_{J_j}
-\sum_{i=1}^{N}
\dual{(\zeta_q)^{-}_{x,N}}{\vphu^{-}_{x,N}}_{I_i}
\nonumber\\
\leq &\;C(N^{-1}\max|\psi^{\prime}|)^{k+1}\enorm{\vph}.
\end{align}

We are left to bound $\mathcal{T}_2(\zeta_u;\vph)$
and $\mathcal{T}_4(\zeta_u;\vphu)$.
%For any $u\in H^{k+2}(K_{ij})$ and $v\in H^{1}(K_{ij})$,
Define on each element $K_{ij}$ the bilinear forms as
\begin{align*}
\mathcal{D}^1_{ij}(\zeta_u,v)
=&\;\dual{\zeta_u}{v_x}_{K_{ij}}
-\dual{(\zeta_u)^{-}_{i,y}}{v^{-}_{i,y}}_{J_{j}}
+\dual{(\zeta_u)^{-}_{i-1,y}}{v^{+}_{i-1,y}}_{J_{j}},
\\
\mathcal{D}^2_{ij}(\zeta_u,v)
=&\;\dual{\zeta_u}{v_y}_{K_{ij}}
-\dual{(\zeta_u)^{-}_{x,j}}{v^{-}_{x,j}}_{I_{i}}
+\dual{(\zeta_u)^{-}_{x,j-1}}{v^{+}_{x,j-1}}_{I_{i}}.
\end{align*}
Take $\mathcal{D}^1_{ij}(\zeta_u,v)$ as an example.
On one hand, there holds the superconvergence property
\begin{subequations}\label{2d:sup}
\begin{align}
\label{2d:sup:1}
|\mathcal{D}^1_{ij}(\zeta_u,v)|\leq &\;
C\sqrt{\frac{h_{j}}{h_{i}}}
\Big[h_{i}^{k+2}
\norm{\partial_x^{k+2}u}_{L^\infty(K_{ij})}
+h_{j}^{k+2}\norm{\partial_y^{k+2}u}_{L^\infty(K_{ij})}
\Big]\norm{v}_{K_{ij}},
\end{align}
for any $v\in \mathcal{Q}^k(K_{ij})$,
which follows directly from Lemma 4.8 of \cite{Zhu2014}.
On the other hand, it holds
\begin{align}
\label{2d:sup:2}
|\mathcal{D}^1_{ij}(\zeta_u,v)|\leq &\;
C\sqrt{\frac{h_{j}}{h_{i}}}\norm{u}_{L^\infty(K_{ij})}\norm{v}_{K_{ij}},
\end{align}
\end{subequations}
which is derived from the Cauchy-Schwarz inequality and inverse inequalities.

Since $u=0$ on $\partial\Omega$, %and $\Pi^{-}=\pi_x^{-}\otimes \pi_y^{-}$,
%one has $(\zeta_u)^{-}_{0,y}=(\zeta_u)^{-}_{N,y}=(\zeta_u)^{-}_{x,0}=(\zeta_u)^{-}_{x,N}=0$.
one gets
\begin{align*}
\mathcal{T}_2(\zeta_u;\vph)
&\;=\sum_{K_{ij}\in \Omega_N}\mathcal{D}^1_{ij}(\zeta_u,\vphp)
%+\sum_{j=1}^{N}\dual{(\zeta_u)^{-}_{N,y}}{\vphp^{-}_{N,y}}_{J_{j}}
%\\
%&\;
+\sum_{K_{ij}\in \Omega_N}\mathcal{D}^2_{ij}(\zeta_u,\vphq).
%+\sum_{i=1}^{N}\dual{(\zeta_u)^{-}_{x,N}}{\vphq^{-}_{x,N}}_{I_{i}}.
\end{align*}

Now we focus on estimating
$\sum_{K_{ij}\in \Omega_N}\mathcal{D}^1_{ij}(\zeta_u,\delta_p)$
based on the decomposition\\
$u =\sum_{\varphi\in\{ S,E_{21},E_{12},E_{22}\}} \varphi$.
For $\varphi =S$,
using $h_{i}\geq C\varepsilon N^{-1}\geq C\varepsilon h_{j}$,
%and $\varepsilon^{-1/2}\norm{\vphp}\leq \enorm{\vph}$,
one gets from \eqref{2d:sup:1}
\begin{align*}
&\sum_{K_{ij}\in \Omega_N}\mathcal{D}^1_{ij}(\zeta_\varphi,\vphp)
\\
\leq&\; C\sum_{K_{ij}\in \Omega_N}\sqrt{\frac{h_{j}}{h_{i}}}
\Big[h_{i}^{k+2}
\norm{\partial_x^{k+2}\varphi}_{L^\infty(K_{ij})}
+h_{j}^{k+2}\norm{\partial_y^{k+2}\varphi}_{L^\infty(K_{ij})}
\Big]\norm{\vphp}_{K_{ij}}
\\
\leq&\; CN^{-(k+1)}\norm{\vphp}_{\Omega_{11}}
+C\Big[\sum_{K_{ij}\in \Omega\setminus\Omega_{11}}
h_{j}/h_{i}N^{-2(k+2)}
\Big]^{1/2}\norm{\vphp}_{\Omega\setminus\Omega_{11}}
\\
\leq&\; CN^{-(k+1)}\enorm{\vph}.
\end{align*}
%due to $h_{i}\geq C\varepsilon N^{-1}\geq C\varepsilon h_{j}$
%and $\varepsilon^{-1/2}\norm{\vphp}\leq \enorm{\vph}$.
For $\varphi =E_{21}$, using \eqref{2d:sup:2} and $\sigma\geq k+2$  yields
\begin{align*}
\sum_{K_{ij}\in \Omega_{11}\cup \Omega_{12}}
\mathcal{D}^1_{ij}(\zeta_\varphi,\vphp)
\leq&\; C\sum_{K_{ij}\in \Omega_{11}\cup \Omega_{12}}
\sqrt{\frac{h_{j}}{h_{i}}}\norm{\varphi}_{L^\infty(K_{ij})}\norm{\vphp}_{K_{ij}}
\nonumber\\
\leq&\; C\sum_{K_{ij}\in \Omega_{11}\cup \Omega_{12}}
\varepsilon^{-1/2}e^{-\alpha(1-x_i)/\varepsilon}\norm{\vphp}_{K_{ij}}
\leq CN^{-(k+1)}\enorm{\vph}.
\end{align*}
Additionally, we introduce the notation $\Omega_x=\Omega_{21}\cup \Omega_{22}$.

Using \eqref{2d:sup}, $\sigma\geq k+2$ and Lemma \ref{lemma:1} gives
\begin{align*}
\sum_{K_{ij}\in \Omega_x}\mathcal{D}^1_{ij}(\zeta_\varphi,\vphp)
\leq&\; C\sum_{K_{ij}\in \Omega_x}
\sqrt{\frac{h_{j}}{h_{i}}}
\min\Big\{
h_{i}^{k+2}\norm{\partial_x^{k+2}\varphi}_{L^\infty(K_{ij})}
+h_{j}^{k+2}\norm{\partial_y^{k+2}\varphi}_{L^\infty(K_{ij})},
\nonumber\\
&\;
\norm{\varphi}_{L^\infty(K_{ij})}
\Big\}\norm{\vphp}_{K_{ij}}
\nonumber\\
\leq &\;
C\sum_{K_{ij}\in \Omega_x}(\varepsilon \max|\psi^{\prime}|)^{-1/2}
\min\Big\{1,\Big(\frac{h_{i}}{\varepsilon}\Big)^{k+2}\Big\}
e^{-\alpha(1-x_i)/\varepsilon}\norm{\vphp}_{K_{ij}}
\nonumber\\
\leq &\;
C\sum_{K_{ij}\in \Omega_x}
(\varepsilon \max|\psi^{\prime}|)^{-1/2}\Theta_i^{k+2}\norm{\vphp}_{K_{ij}}
%\nonumber\\
%\leq &\;
%C(\max|\psi^{\prime}|)^{-1/2}
%\Bigg(\sum_{K_{ij}\in \Omega_x}\Theta_i^{2(k+2)}\Bigg)^{1/2}
%\Bigg(\sum_{K_{ij}\in %\Omega_x}\varepsilon^{-1}\norm{\vphp}^2_{K_{ij}}\Bigg)^{1/2}
%\\
\nonumber\\
\leq &\;
C(\max|\psi^{\prime}|)^{-1/2}\Big(\sum_{j=1}^N\sum_{i=N/2+1}^N\Theta_i\Big)^{1/2}
\max_{N/2+1\leq i \leq N}\Theta_i^{k+3/2}
\Big(\varepsilon^{-1/2}\norm{\vphp}\Big)
\\
\leq&\; C(N^{-1}\max|\psi^{\prime}|)^{k+1}\enorm{\vph},
\end{align*}
here we used $h_{j}/h_i\leq h_{j}/\hbar\leq C(\varepsilon \max|\psi^{\prime}|)^{-1}$
by Lemma \ref{lemma:2}.
Analogously, one can bound
$\sum_{K_{ij}\in \Omega_{N}}\mathcal{D}_{ij}(\zeta_\varphi,\vphp)$
for $\varphi=E_{12}, E_{22}$.

%By the Cauchy-Schwarz inequality,
%$\hbar\geq C\varepsilon N^{-1}\max|\psi^{\prime}|$
%in Lemma \ref{lemma:2} and \eqref{etau:2d:1}, we have
%\begin{align*}
%\sum_{j=1}^{N}\dual{(\zeta_u)^{-}_{N,y}}{\vphp^{-}_{N,y}}_{J_{j}}
%\leq&\;
%\Bigg(\sum_{j=1}^{N}\norm{(\zeta_u)^{-}_{N,y}}_{J_{j}}^2\Bigg)^{1/2}
%\big(\hbar^{-1/2}\norm{\vphp}\big)
%\\
%\leq&\; C(N^{-1}\max|\psi^{\prime}|)^{k+1/2}\enorm{\vph}.
%\end{align*}
Another term in $\mathcal{T}_2(\zeta_u;\vph)$ can be bounded similarly.
Consequently,
\begin{align}
\mathcal{T}_2(\zeta_u;\vph)\leq C(N^{-1}\max|\psi^{\prime}|)^{k+1}\enorm{\vph}.
\end{align}
%due to $N^{-1/2}\max|\psi^{\prime}|\leq C$ for the meshes considered.

Now we turn to bound the convection term $\mathcal{T}_4(\zeta_u;\vphu)$,
which needs a different treatment. Define
\begin{align}
\Omega_{11}^{\star}
=\begin{cases}
\Omega_{11},
& \textrm{for \; \; S-type meshes},
\\
(0,x_{N/2+1})\times(0,y_{N/2+1}),
& \textrm{for \; \;a\,\, B-type mesh}.
\end{cases}
\end{align}

We introduce  by $\Omega_f= \Omega^{\star}_{11}\setminus\Omega_{11}$.
Let $|\Omega_f|$ be the measure of the domain $\Omega_f$.
Then $|\Omega_f|=0$ for the S-type meshes
and $|\Omega_f|\leq C\varepsilon\ln(1/\varepsilon)$ for a B-type mesh.
By Lemma \ref{lemma:2}, one has also $\tilde{h}_{\Omega_f}\geq C\varepsilon$ for a B-type mesh.

Using \eqref{etau:2d}
and \eqref{mesh:property:S}-\eqref{mesh:property:B:1},
we have
\begin{align*}
&\dual{\af \zeta_u}{\vphu_x}+\dual{\as \zeta_u}{\vphu_y}
=\dual{\zeta_u}{\bm a\cdot\nabla \vphu}
\\
&\leq
C\Big[N \norm{\zeta_u}^2_{\Omega_{11}}
+\sum_{K_{ij}\in\Omega_f}
\tilde{h}^{-1}_{ij}\norm{\zeta_u}^2_{K_{ij}}
+\sum_{K_{ij}\in\Omega\setminus\Omega_{11}^{\star}}
\tilde{h}^{-1}_{ij}\norm{\zeta_u}^2_{K_{ij}}
\Big]^{1/2}
\interleave\vph\interleave_{\sharp}
\\
&\leq C\Bigg\{
N^{-(2k+1)}
+\Big(\tilde{h}^{-1}_{\Omega_f}|\Omega_f|
+\sum_{K_{ij}\in\Omega\setminus\Omega_{11}^{\star}}
\frac{h_{i}h_{j}}{\min\{h_{i},h_{j}\}}
\Big)
(N^{-1}\max|\psi^{\prime}|)^{2(k+1)}\Bigg\}^{1/2}
\interleave\vph\interleave_{\sharp}
\\
&\leq CQ^{\star} N^{-(k+1/2)},
\end{align*}
where $Q^{\star}$ is given by \eqref{bounding:Q}.

As for the other terms in $\mathcal{T}_4(\zeta_u;\vphu)$, one can estimate
\begin{align}
\Bigg(\sum_{i=1}^{N}\sum_{j=1}^{N}\norm{(\zeta_u)_{i,y}^{-}}_{J_j}^2\Bigg)^{1/2}\enorm{\vph}
\leq C(N^{-1}\max|\psi^{\prime}|)^{k+1/2}\enorm{\vph}
\end{align}
due to \eqref{etau:2d:2}. Consequently,
\begin{align}
\mathcal{T}_4(\zeta_u;\vphu)\leq &\; CQ^{\star} N^{-(k+1/2)}\interleave\vph\interleave_{\sharp}.
\end{align}
Summarizing, we obtain \eqref{boundedness:B}.

(2) Now we start to prove \eqref{app:Ritz:projection:2d}.
Taking $\vph=\bm \delta$ in \eqref{error:equation}, one gets
\begin{align}\label{energy:identity}
\enorm{\bm \delta}^2= B(\bm \delta;\bm \delta)
= B(\bm \zeta;\bm\delta).
\end{align}

Slightly modifying the proof of Lemma 4.6 in \cite{Zhu2014},
we can state stability with respect to the norm
$\interleave\cdot\interleave_{\sharp}$, i.e.,
\begin{equation}\label{stability:bilinear:2d}
\begin{split}
\sup_{\vph\in \mathcal{W}_N^3}\frac{B(\bm w;\vph)}{\interleave\vph\interleave_{\sharp}}
\geq&\; C\interleave\bm w\interleave_{\sharp},
\end{split}
\end{equation}
here $C>0$ is independent of $\varepsilon$ and $N$.
Thus one derives from \eqref{error:equation} and \eqref{boundedness:B}
\begin{align}\label{estimate:delta}
\enorm{\bm\delta}\leq \interleave\bm \delta\interleave_{\sharp}
\leq C\sup_{\vph\in \mathcal{W}_N^3}
\frac{B(\bm \delta;\vph)}{\interleave\vph\interleave_{\sharp}}
= C\sup_{\vph\in \mathcal{W}_N^3}
\frac{B(\bm \zeta;\vph)}{\interleave\vph\interleave_{\sharp}}
\leq CQ^{\star} N^{-(k+1/2)}.
\end{align}

Since $\enorm{\bm\zeta}$ can be estimated from \eqref{2dGR:property},
we can bound $\enorm{\bm w-\digamma \bm w}$ from \eqref{estimate:delta} and the triangle inequality.

Using \eqref{etau:2d}, one has
\[
\norm{u-\digamma_1 u}\leq
\norm{\zeta_u}+\norm{\delta_u}
\leq C[(N^{-1}\max|\psi^{\prime}|)^{k+1}+\enorm{\bm\delta}]
\leq CQ^{\star} N^{-(k+1/2)}.
\]
If we differentiate \eqref{Def:Ritz:projection:2d} with respect to $t$,
and take into account that $u_t,p_t,q_t$ satisfy similar estimates as $u$, $p$ and $q$,
we can get estimate for $\norm{u_t-\digamma_1 u_t}$
analogously as before. This completes the proof.
\end{proof}

\section{Error estimates}
\label{sec:convergence:energy:norm}

In this section we use Theorem \ref{lemma:ritz:2d}
to derive error estimates as well
for the semi-discrete as the fully-discrete LDG method.

\begin{theorem}\label{thm:semi:discrete}
Let $\bm w=(u,p,q)=(u,\varepsilon u_x,\varepsilon u_y)$
where $u$ is the exact solution of \eqref{cd:spp:2d}
satisfying Proposition \ref{thm:reg:2d}.
Let $\wN=(\uN,\pN,\qN)\in \mathcal{W}_N^3$
be the numerical solution of semi-discrete LDG method \eqref{LDG:scheme:2d}.
Then, it holds
\begin{align}\label{error:estimate:semi}
\norm{(u-\uN)(T)}+\int_0^T \enorm{\bm w-\wN} dt
\leq &\; C(1+T)Q^{\star}N^{-(k+1/2)},
\end{align}
where $Q^{\star}$ is given by \eqref{bounding:Q}
and $C>0$ is a constant independent of $\varepsilon$ and $N$.
\end{theorem}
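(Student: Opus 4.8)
The plan is to use the Ritz projection $\digamma \bm w$ as an intermediary, splitting the error via $\bm w - \wN = (\bm w - \digamma \bm w) + (\digamma \bm w - \wN)$. The first part is controlled directly by Theorem~\ref{lemma:ritz:2d}, so the real work is to estimate $\bm \xi \equiv \digamma \bm w - \wN$, whose components I denote $(\xi_u,\xi_p,\xi_q)\in\mathcal{W}_N^3$. First I would write down the error equation satisfied by $\bm\xi$. Since the exact solution satisfies the LDG scheme in the sense that $\dual{u_t}{\vphu}+B(\bm w;\vph)=\dual{f}{\vphu}$ (the scheme is consistent for the true flux $\bm w=(u,\varepsilon u_x,\varepsilon u_y)$), subtracting \eqref{compact:form:2d} gives $\dual{(u-\uN)_t}{\vphu}+B(\bm w-\wN;\vph)=0$. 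Now I invoke the defining property \eqref{Def:Ritz:projection:2d} of the Ritz projection, $B(\bm w - \digamma\bm w;\vph)=0$, which collapses the bilinear form so that $B(\bm w-\wN;\vph)=B(\digamma\bm w-\wN;\vph)=B(\bm\xi;\vph)$. Writing $u-\uN = (u-\digamma_1 u)+\xi_u$, the error equation becomes
\begin{align}\label{plan:erreq}
\dual{(\xi_u)_t}{\vphu}+B(\bm\xi;\vph)
=-\dual{(u-\digamma_1 u)_t}{\vphu}
\quad\forall \vph\in\mathcal{W}_N^3.
\end{align}

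The key energy step is to choose $\vph=\bm\xi$ in \eqref{plan:erreq}. The term $\dual{(\xi_u)_t}{\xi_u}=\tfrac12\frac{d}{dt}\norm{\xi_u}^2$, and by the definition of the energy norm, $B(\bm\xi;\bm\xi)=\enorm{\bm\xi}^2\geq 0$. On the right-hand side I bound $-\dual{(u-\digamma_1 u)_t}{\xi_u}\leq \norm{(u-\digamma_1 u)_t}\,\norm{\xi_u}$ by Cauchy--Schwarz. Noting that $\partial_t(u-\digamma_1 u)=u_t-\digamma_1 u_t$ because $\digamma$ commutes with $\partial_t$ (the projection is defined through the time-independent form $B$), this is exactly the quantity $\norm{u_t-\digamma_1 u_t}$ estimated in Theorem~\ref{lemma:ritz:2d}. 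This yields the differential inequality
\begin{align}\label{plan:diffineq}
\frac12\frac{d}{dt}\norm{\xi_u}^2+\enorm{\bm\xi}^2
\leq \norm{u_t-\digamma_1 u_t}\,\norm{\xi_u}
\leq C Q^{\star}N^{-(k+1/2)}\norm{\xi_u}.
\end{align}
Since $\xi_u(0)=\uph^0-\digamma_1 u(0)$ and $\uph^0=\pi u_0$ is the $L^2$-projection while $\norm{u_0-\digamma_1 u(0)}$ is $O(Q^\star N^{-(k+1/2)})$ by the theorem, the initial value $\norm{\xi_u(0)}$ is itself $O(Q^{\star}N^{-(k+1/2)})$.

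To finish, I would integrate \eqref{plan:diffineq} in time. Dropping the nonnegative $\enorm{\bm\xi}^2$ term first and applying a Gronwall-type argument (or the elementary observation that $\frac{d}{dt}\norm{\xi_u}\leq CQ^\star N^{-(k+1/2)}$) gives the pointwise bound $\norm{\xi_u(t)}\leq \norm{\xi_u(0)}+CtQ^{\star}N^{-(k+1/2)}\leq C(1+T)Q^{\star}N^{-(k+1/2)}$ for all $t\in[0,T]$. Substituting this back and integrating \eqref{plan:diffineq} over $[0,T]$ controls $\int_0^T\enorm{\bm\xi}^2\,dt$, and then the Cauchy--Schwarz inequality $\int_0^T\enorm{\bm\xi}\,dt\leq T^{1/2}\big(\int_0^T\enorm{\bm\xi}^2\,dt\big)^{1/2}$ converts this into the required $L^1$-in-time bound on $\enorm{\bm\xi}$. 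Finally the triangle inequality recombines $\bm\xi$ with the projection error $\bm w-\digamma\bm w$, whose $\enorm{\cdot}$ is $O(Q^\star N^{-(k+1/2)})$ by Theorem~\ref{lemma:ritz:2d}, and whose $\int_0^T\enorm{\bm w-\digamma\bm w}\,dt$ contributes the $T$-dependence. I expect the main obstacle to be bookkeeping the $(1+T)$ factor cleanly across both the pointwise $L^\infty$-in-time estimate for $\norm{(u-\uN)(T)}$ and the $L^1$-in-time estimate for the energy norm; the two require slightly different handling of the Gronwall integration, and one must ensure that no hidden $\varepsilon$-dependence creeps in through the constants, which is guaranteed precisely because Theorem~\ref{lemma:ritz:2d} delivers $\varepsilon$-uniform projection bounds.
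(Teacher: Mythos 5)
Your skeleton is exactly the paper's: split via the Ritz projection, use $B(\bm w-\digamma\bm w;\vph)=0$ to reduce the error equation to one for $\bm\xi$, test with $\vph=\bm\xi$, and invoke the bound on $\norm{u_t-\digamma_1 u_t}$ from Theorem \ref{lemma:ritz:2d}. The one place where your argument genuinely diverges is the final integration, and as written it does not deliver the stated $(1+T)$ factor. From
\[
\tfrac12\tfrac{d}{dt}\norm{\xi_u}^2+\enorm{\bm\xi}^2\leq CQ^{\star}N^{-(k+1/2)}\norm{\xi_u},
\]
your route (drop $\enorm{\bm\xi}^2$ to get the pointwise bound $\norm{\xi_u(t)}\leq C(1+T)Q^{\star}N^{-(k+1/2)}$, substitute back to control $\int_0^T\enorm{\bm\xi}^2dt$, then apply Cauchy--Schwarz in time) yields
\[
\int_0^T\enorm{\bm\xi}\,dt\leq T^{1/2}\Big(\int_0^T\enorm{\bm\xi}^2\,dt\Big)^{1/2}\leq CT^{1/2}(1+T)Q^{\star}N^{-(k+1/2)},
\]
i.e.\ roughly $(1+T)^{3/2}$ rather than $(1+T)$. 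This is the obstacle you yourself flag. The paper avoids it with a small but essential device: by assumption \eqref{assumption:coef:2d} the energy norm \eqref{energy:norm:2d} controls the $L^2$ norm, $\enorm{\bm\xi}\geq\beta^{1/2}\norm{\xi_u}$, hence $B(\bm\xi;\bm\xi)=\enorm{\bm\xi}^2\geq\beta^{1/2}\norm{\xi_u}\enorm{\bm\xi}$. Every term in the energy identity then carries a factor $\norm{\xi_u}$, which can be cancelled to give the \emph{linear} differential inequality
\[
\tfrac{d}{dt}\norm{\xi_u}+\beta^{1/2}\enorm{\bm\xi}\leq\norm{\eta_{u,t}}\leq CQ^{\star}N^{-(k+1/2)},
\]
and a single integration over $[0,T]$ produces both the pointwise bound at $t=T$ and the $L^1$-in-time energy bound simultaneously, each with the clean $(1+T)$ constant. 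Apart from this step your proposal is correct; if you only wanted the result up to a harmless power of $(1+T)$, your Cauchy--Schwarz route would suffice, but to match the theorem as stated you should replace it with the cancellation argument above.
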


\begin{proof}
Denote the numerical error by $\bm e= \bm w-\wN$, and divide into the two parts $\bm\eta$ and $\bm\xi$
with
 $\bm e=\bm \eta-\bm \xi$,
where $\bm \eta=\bm w- \digamma \bm w$ and $\bm \xi=\wN- \digamma \bm w$.
Here $\digamma \bm w$ is the Ritz projection of $\bm w$
defining in \eqref{Def:Ritz:projection:2d}. The consistence of the numerical fluxes in the LDG method leads to
 the error equation
\begin{equation}
\dual{e_{u,t}}{\vphu}+B(\bm e;\vph)=0
\quad \forall \vph=(\vphu,\vphp,\vphq)\in \mathcal{W}_N^3.
\end{equation}
Taking $\vph=\bm \xi$ yields
\begin{equation}\label{energy:identity:semi}
\frac12 \frac{\textrm{d}}{\textrm{d} t}\norm{\xi_u}^2
+B(\bm \xi;\bm \xi)=\dual{\eta_{u,t}}{\xi_u},
\end{equation}
since $B(\bm \eta;\bm \xi)=0$ by the definition \eqref{Def:Ritz:projection:2d}.
Using \eqref{assumption:coef:2d}, we conclude
\[
B(\bm \xi;\bm \xi)=\enorm{\bm\xi}^2
\geq \beta^{\frac12}\norm{\xi_u}\enorm{\bm\xi}.
\]
By \eqref{energy:identity:semi} and the Cauchy-Schwarz inequality one gets
\[
\norm{\xi_u}\Big[\frac{\textrm{d}}{\textrm{d} t}\norm{\xi_u}
+\beta^{\frac12}\enorm{\bm\xi}\Big]
\leq \frac12 \frac{\textrm{d}}{\textrm{d} t}\norm{\xi_u}^2+B(\bm \xi;\bm \xi)
=\dual{\eta_{u,t}}{\xi_u}\leq \norm{\eta_{u,t}}\norm{\xi_u}.
\]
Canceling $\norm{\xi_u}$ on both sides results in
\[
\frac{\textrm{d}}{\textrm{d} t}\norm{\xi_u}+\beta^{\frac12}\enorm{\bm\xi}
\leq \norm{\eta_{u,t}}\leq CQ^{\star}N^{-(k+1/2)}
\]
due to Theorem \ref{lemma:ritz:2d}.
 %$\norm{\xi_u(0)}\leq \norm{u_0-\pi u_0}+\norm{u_0-\digamma_1 u_0}
%\leq C(N^{-1}\max|\psi^{\prime}|)^{k+1}$, one has
Taking into account the choice of the initial value we have
\begin{align}
\norm{\xi_u(T)}+\beta^{\frac12}\int_0^T \enorm{\bm\xi} dt
\leq &\; \norm{\xi_u(0)}+ \int_0^T CQ^{\star}N^{-(k+1/2)} dt
\nonumber\\
\leq &\; C(1+T)Q^{\star}N^{-(k+1/2)}.
\end{align}
The use of  Theorem \ref{lemma:ritz:2d} and the triangle inequality give
finally the estimate of Theorem \ref{error:estimate:semi}.

\end{proof}

Next we study the error of the fully discrete scheme.
\begin{theorem}\label{thm:fully:discrete}
The error of fully-discrete LDG scheme \eqref{fully:theta:scheme} satisfies
\begin{align}\label{estimate:fully:2d}
\norm{u(T)-\uph^M}+\Delta t\sum_{m=1}^M \enorm{(\bm w-\wph)^{m,\theta}}
\leq &\; C(1+T)\big((\Delta t)^\nu+Q^{\star}N^{-(k+1/2)}\big),
\end{align}
where $Q^{\star}$ is given by \eqref{bounding:Q},
$\nu=2$ for $\theta=1/2$ and $\nu=1$ for $1/2<\theta\leq 1$.
Here $C>0$ is a constant independent of $\varepsilon$ and $N$.
\end{theorem}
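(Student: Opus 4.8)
The plan is to combine the semi-discrete error analysis of Theorem~\ref{thm:semi:discrete} with a standard truncation-error estimate for the $\theta$-scheme, using the Ritz projection as the comparison object in both space and time. First I would split the error as $\bm w^{m,\theta}-\wph^{m,\theta}=\bm\eta^{m,\theta}-\bm\xi^{m,\theta}$, where $\bm\eta=\bm w-\digamma\bm w$ is the projection error controlled by Theorem~\ref{lemma:ritz:2d} and $\bm\xi^m=\wph^m-\digamma\bm w^m\in\mathcal{W}_N^3$ is the discrete remainder. Subtracting the fully-discrete scheme \eqref{fully:theta:scheme} from the continuous equation \eqref{compact:form:2d} evaluated appropriately, and using $B(\bm\eta^{m,\theta};\vph)=0$ from the definition \eqref{Def:Ritz:projection:2d} of the Ritz projection, I expect to arrive at an error equation of the schematic form
\begin{equation}
\dual{\frac{\xi_u^m-\xi_u^{m-1}}{\Delta t}}{\vphu}+B(\bm\xi^{m,\theta};\vph)
=\dual{\frac{\eta_u^m-\eta_u^{m-1}}{\Delta t}}{\vphu}+\dual{\rho^m}{\vphu},\nonumber
\end{equation}
where $\rho^m$ collects the time-discretization (consistency) residual of the $\theta$-scheme.

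The key technical step is to estimate $\rho^m$. By a Taylor expansion in time one shows the truncation error of the $\theta$-rule is $O((\Delta t)^2)$ when $\theta=1/2$ (the Crank--Nicolson symmetry kills the first-order term) and $O(\Delta t)$ for $1/2<\theta\le 1$, which accounts for the exponent $\nu$ in \eqref{estimate:fully:2d}; the smoothness of the time derivatives needed here is guaranteed by Proposition~\ref{thm:reg:2d}. The first right-hand term is handled by writing $(\eta_u^m-\eta_u^{m-1})/\Delta t=\frac{1}{\Delta t}\int_{t^{m-1}}^{t^m}\eta_{u,t}\,ds$ and invoking the bound on $\norm{u_t-\digamma_1 u_t}$ from Theorem~\ref{lemma:ritz:2d}, which gives an $O(Q^\star N^{-(k+1/2)})$ contribution.

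Next I would test with $\vph=\bm\xi^{m,\theta}$ and exploit the coercivity $B(\bm\xi^{m,\theta};\bm\xi^{m,\theta})=\enorm{\bm\xi^{m,\theta}}^2$ together with the inequality $\dual{\xi_u^m-\xi_u^{m-1}}{\xi_u^{m,\theta}}\ge\frac12(\norm{\xi_u^m}^2-\norm{\xi_u^{m-1}}^2)$, valid precisely for $\theta\ge 1/2$ (this is where the restriction on $\theta$ is used to obtain unconditional stability). Summing over $m=1,\dots,M$, telescoping the $\norm{\xi_u^m}^2$ differences, using $\xi_u^0=0$ from the choice $\uph^0=\pi u_0=\digamma_1 u^0$ of the initial data, and applying Cauchy--Schwarz on the right-hand side in the same manner as in the proof of Theorem~\ref{thm:semi:discrete}, I would obtain a bound on $\norm{\xi_u^M}+\Delta t\sum_{m}\enorm{\bm\xi^{m,\theta}}$ of order $(1+T)((\Delta t)^\nu+Q^\star N^{-(k+1/2)})$. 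A final triangle inequality with the projection bounds from Theorem~\ref{lemma:ritz:2d} then yields \eqref{estimate:fully:2d}.

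The main obstacle I anticipate is the careful bookkeeping of the time-consistency residual $\rho^m$ at the intermediate level $t^{m,\theta}$: one must compare $B(\bm w^{m,\theta};\vph)$ against $\dual{u_t^{m,\theta}}{\vphu}$ and the forcing $\dual{f^{m,\theta}}{\vphu}$, and the mismatch between $u_t^{m,\theta}=\theta u_t^m+(1-\theta)u_t^{m-1}$ and $(u^m-u^{m-1})/\Delta t$ is exactly the $\theta$-scheme truncation term whose order depends sharply on whether $\theta=1/2$. Getting the $(\Delta t)^2$ rate for Crank--Nicolson requires expanding to one order higher and verifying the cancellation, so this Taylor-expansion step is the delicate part; everything else is a routine adaptation of the semi-discrete argument combined with discrete Gr\"onwall-type summation.
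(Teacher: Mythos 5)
Your proposal follows essentially the same route as the paper: the same Ritz-projection splitting $\bm e^m=\bm\eta^m-\bm\xi^m$, the same error equation with a consistency residual combining the $\theta$-scheme truncation term (Taylor-expanded to give $\nu=2$ only for $\theta=1/2$) and the projection term $(\eta_u^m-\eta_u^{m-1})/\Delta t$ controlled by Theorem \ref{lemma:ritz:2d}, followed by testing with $\bm\xi^{m,\theta}$, telescoping for $\theta\ge 1/2$, and a final triangle inequality. One small correction: $\xi_u^0=\pi u_0-\digamma_1 u_0$ is not zero, since the local $L^2$ projection and the Ritz projection differ, but it is bounded by $CQ^{\star}N^{-(k+1/2)}$ via the triangle inequality through $u_0$, which is all the argument requires.
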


\begin{proof}
Define $\digamma \bm w^m = \digamma\bm w(t^m)$ and
\[
\bm \eta^m=\bm w^m- \digamma\bm w^m,\;
\bm \xi^m=\wph^m- \digamma\bm w^m \in \mathcal{W}_N^3.
\]
For the exact solution it holds
\begin{equation}
\dual{u^{m,\theta}_{t}}{\vphu}+B(\bm w^{m,\theta};\vph)
=\dual{f^{m,\theta}}{\vphu}
\quad \forall \vph=(\vphu,\vphp,\vphq)\in \mathcal{W}_N^3.
\end{equation}
Subtracting this identity from \eqref{fully:theta:scheme}
and noticing the relation \\$B(\bm \eta^{m,\theta};\vph)=B(\bm w^{m,\theta}-\digamma\bm w^{m,\theta};\vph)=0$
leads us to the  the error equation
\begin{align}\label{ineq:2}
\dual{\frac{\xi_u^m-\xi_u^{m-1}}{\Delta t}}{\vphu}
+B(\bm \xi^{m,\theta};\vph)
=\dual{L^m}{\vphu}, \quad \vph=(\vphu,\vphp,\vphq)\in \mathcal{W}_N^3,
\end{align}
where
\begin{equation}
L^m = \Big(u_t^{m,\theta}-\frac{u^m-u^{m-1}}{\Delta t}\Big)
+\frac{\eta_u^m-\eta_u^{m-1}}{\Delta t}.
\end{equation}
Using  a Taylor expansion we get
\begin{align*}
u_t^{m,\theta}-\frac{u^m-u^{m-1}}{\Delta t}
=&\; \Big(\theta-\frac12\Big)\Delta t u_{tt}^m
+\Big(\frac{1-\theta}{2}u_{ttt}(s_1)-\frac16u_{ttt}(s_2)\Big)(\Delta t)^2,
\\
\frac{\eta_u^m-\eta_u^{m-1}}{\Delta t}
=&\; u_{t}(s_3)-\digamma_1 u_{t}(s_3),
\end{align*}
here  the $s_i (i=1,2,3)$ are intermediate points belonging to $(t^{m-1},t^{m})$.
By \eqref{reg:u:2d} and Theorem \ref{lemma:ritz:2d} we obtain
\[
\norm{L^m}\leq
C\Big[(\theta-\frac12)\Delta t
+ (\Delta t)^2+Q^{\star}N^{-(k+1/2)}\Big]
\equiv \mathcal{E}_{N,\Delta t}.
\]

We use the stability result
\begin{subequations}
\begin{align}
B(\bm \xi^{m,\theta};\bm \xi^{m,\theta})=\enorm{\bm \xi^{m,\theta}}^2
\geq  \beta^{\frac12}\norm{\xi^{m,\theta}_u}\enorm{\bm\xi^{m,\theta}}.
\end{align}
The application of the Cauchy-Schwarz inequality, $\theta\geq 1/2$ and the triangle inequality imply
\begin{align}
\dual{\frac{\xi_u^m-\xi_u^{m-1}}{\Delta t}}{\xi^{m,\theta}_u}
=&\; (\Delta t)^{-1}\dual{\xi_u^m-\xi_u^{m-1}}{\theta \xi_u^m+(1-\theta)\xi_u^{m-1}}
\nonumber\\
=&\; (\Delta t)^{-1}\big[\theta\norm{\xi_u^m}^2-(2\theta-1)\dual{\xi_u^m}{\xi_u^{m-1}}-(1-\theta)\norm{\xi_u^{m-1}}^2\big]
\nonumber\\
\geq &\;(\Delta t)^{-1}\big[\theta\norm{\xi_u^m}^2-(2\theta-1)\norm{\xi_u^m}\norm{\xi_u^{m-1}}-(1-\theta)\norm{\xi_u^{m-1}}^2\big]
\nonumber\\
= &\; (\Delta t)^{-1}(\norm{\xi_u^m}-\norm{\xi_u^{m-1}})(\theta\norm{\xi_u^m}+(1-\theta)\norm{\xi_u^{m-1}})
\nonumber\\
\geq &\; (\Delta t)^{-1}(\norm{\xi_u^m}-\norm{\xi_u^{m-1}})\norm{\theta \xi_u^m +(1-\theta)\xi_u^{m-1}}
\nonumber\\
= &\; (\Delta t)^{-1}(\norm{\xi_u^m}-\norm{\xi_u^{m-1}})\norm{\xi^{m,\theta}_u}.
\end{align}
\end{subequations}
Inserting $\vph=\bm \xi$  in \eqref{ineq:2}  yields
\begin{align*}
\norm{\xi^{m,\theta}_u}
\Big[(\Delta t)^{-1}(\norm{\xi_u^m}-\norm{\xi_u^{m-1}})+\beta^{\frac12}\enorm{\bm\xi^{m,\theta}}\Big]
\leq \norm{L^m}\norm{\xi^{m,\theta}_u}
\leq \mathcal{E}_{N,\Delta t}\norm{\xi^{m,\theta}_u}.
\end{align*}
or
\[
(\Delta t)^{-1}(\norm{\xi_u^m}-\norm{\xi_u^{m-1}})+\beta^{\frac12}\enorm{\bm\xi^{m,\theta}}
\leq \mathcal{E}_{N,\Delta t}.
\]
Summing over $m=1,2,\dots,M$ leads to
\[
\norm{\xi_u^M}+\beta^{\frac12} \sum_{m=1}^M \enorm{\bm\xi^{m,\theta}} \Delta t
\leq \norm{\xi_u^0}+ \Delta t \sum_{m=1}^M \mathcal{E}_{N,\Delta t}
\leq C(1+T) \mathcal{E}_{N,\Delta t},
\]
due to the choice of the initial value.
By Theorem \ref{lemma:ritz:2d} and the triangle inequality we finally get the result stated in Theorem \ref{thm:fully:discrete}.
\end{proof}

\section{Numerical experiments}
\label{sec:experiments}

In this section,
we present some numerical experiments
for the fully-discrete LDG Crank-Nicolson scheme,
i.e., for $\theta=1/2$.  All the calculations were realized in MATLAB R2015B.
The systems of linear equations resulting from the discrete problems
were solved by LU-decomposition. All integrals were evaluated
using the 5-point Gauss-Legendre quadrature rule.

Let $T=1$ and $\Omega=(0,1)^2$. We consider the test problem
\begin{equation}\label{model}
\begin{split}
&u_t-\varepsilon \Delta u+u_x+u_y + u =f(x,y,t),
\end{split}
\end{equation}
where $f$ and the initial-boundary conditions are chosen in
such a way that the exact solution is given by
\begin{align}
u(x,y,t) = 
e^ t\sin (\pi xy)(1-e^{-(1-x)/\varepsilon})(1-e^{-(1-y)/\varepsilon}).
\end{align}

The fully-discrete LDG method is implemented on the three layer-adapted meshes
listed in Table \ref{table:functions}.
We take $\sigma=k+2$ and compute the $L^2$-error
$\norm{u(T)-\uph(T)}$ and the energy-norm error
$\Delta t\sum_{m=1}^M \enorm{(\bm w-\wph)^{m,\theta}}$, respectively.
The numerical convergence order is obtained from the formula
\begin{align*}
r_2 = \frac{ \log e_{N}-\log e_{2N} }{\log 2 }
\quad \textrm{or} \quad
r_S = \frac{ \log e_{N}-\log e_{2N} }{\log p_S },
\end{align*}
where $e_N$ is the error using $N$ elements in the $x$- and $y$-direction.
For the Shishkin mesh we use a scaling with
$p_S = 2\ln N/\ln(2N)$
to compute  the numerical convergence order
with respect to the power of $\ln N/N$.

The three numerical solutions and the absolute errors
on a S-mesh, BS-mesh and B-type mesh
are plotted in Figure \ref{figure:solutions},
when $\varepsilon=10^{-3}$, $k=1$ and $N=64$.
From this picture, we can see that the LDG method with these meshes
gives well-behaved numerical solutions.
The solution on the S-mesh produces the largest errors,
while the solutions on the BS-mesh and B-type mesh
have a comparable numerical performance.

We test also the numerical convergence rate in space.
Set $\varepsilon = 10^{-8}$
and compute the $L^2$-error and energy-error for $k=1,2$.
To obtain a balanced space-time error,
we take the time step as $\Delta t=N^{-1}$ for $k=1$
and $\Delta t=N^{-1.5}$ for $k=2$.
In Tables \ref{table:1/2:L2}-\ref{table:1/2:energy},
one observes convergence rates of order $k+1$
and $k+1/2$ for the $L^2$-norm error and the energy-norm error,
which imply that the numerical error behaves as the energy-norm error estimate in
Theorem \ref{thm:semi:discrete}.

To test the convergence rate in time,
we take $k=3$ and $N=128$ such that the temporal error is dominant.
Decrease the time step from $0.5$ to $0.0625$
and compute the $L^2$-error and the energy-error, respectively.
In the Tables \ref{table:time:L2}-\ref{table:time:energy} a second order convergence rate is clearly observed
which corresponds to the theoretical prediction of Theorem \ref{thm:fully:discrete}.

Finally we investigate the robustness of the LDG method
with respect to the small parameter $\varepsilon$.
Fix $k=1$, $N=128$ we vary  $\varepsilon$ from $10^{-4}$ to $10^{-11}$.
 Table \ref{table:eps} shows that both errors are almost constant.

\begin{figure}[htp]
\centering
\caption{\small
Top: numerical solutions. Bottom: absolute errors.
Left: S-mesh; Middle: BS-mesh; Right: B-type mesh.}
\includegraphics[width=1.9in,height=1.6in]{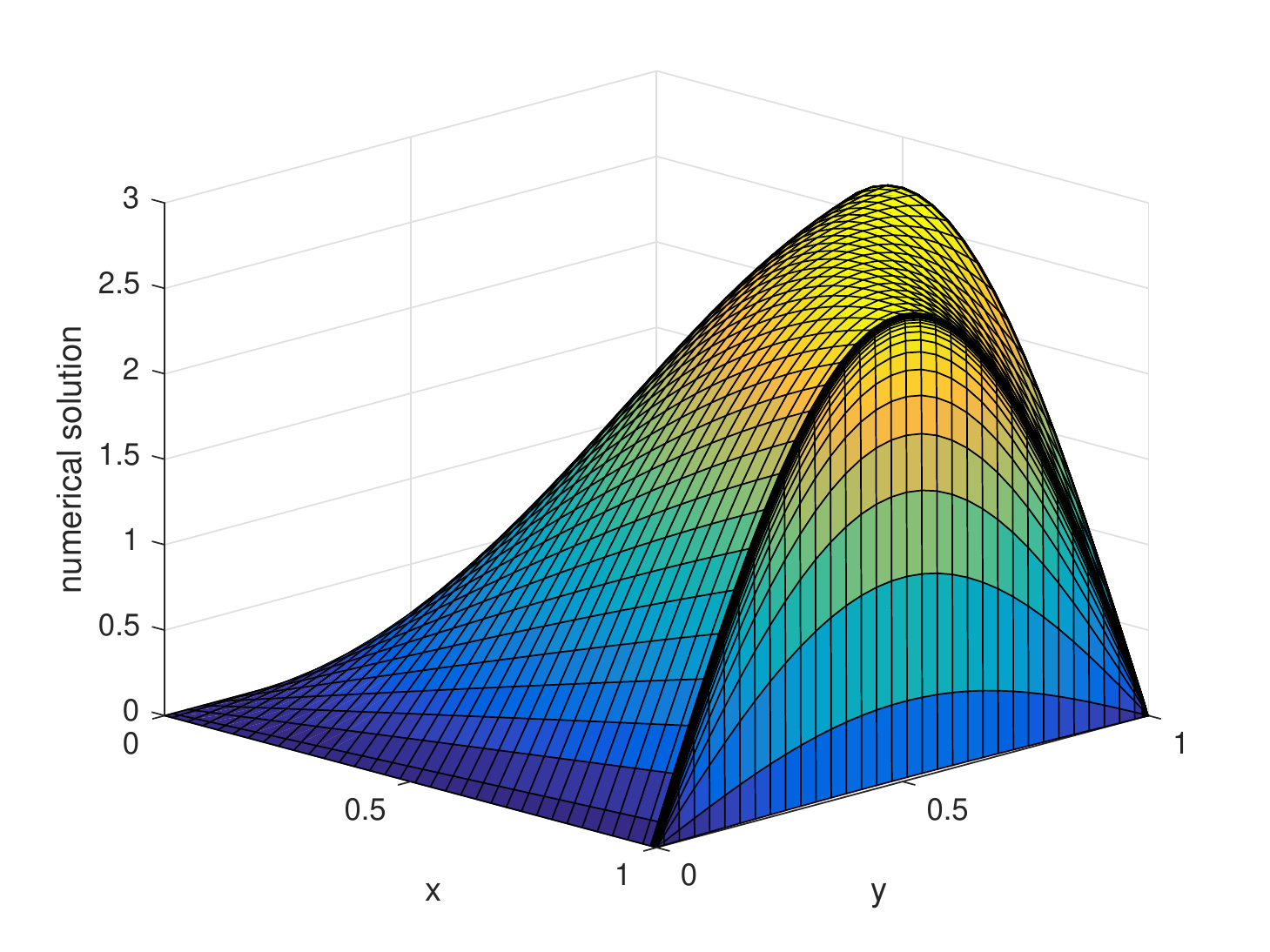}
\includegraphics[width=1.9in,height=1.6in]{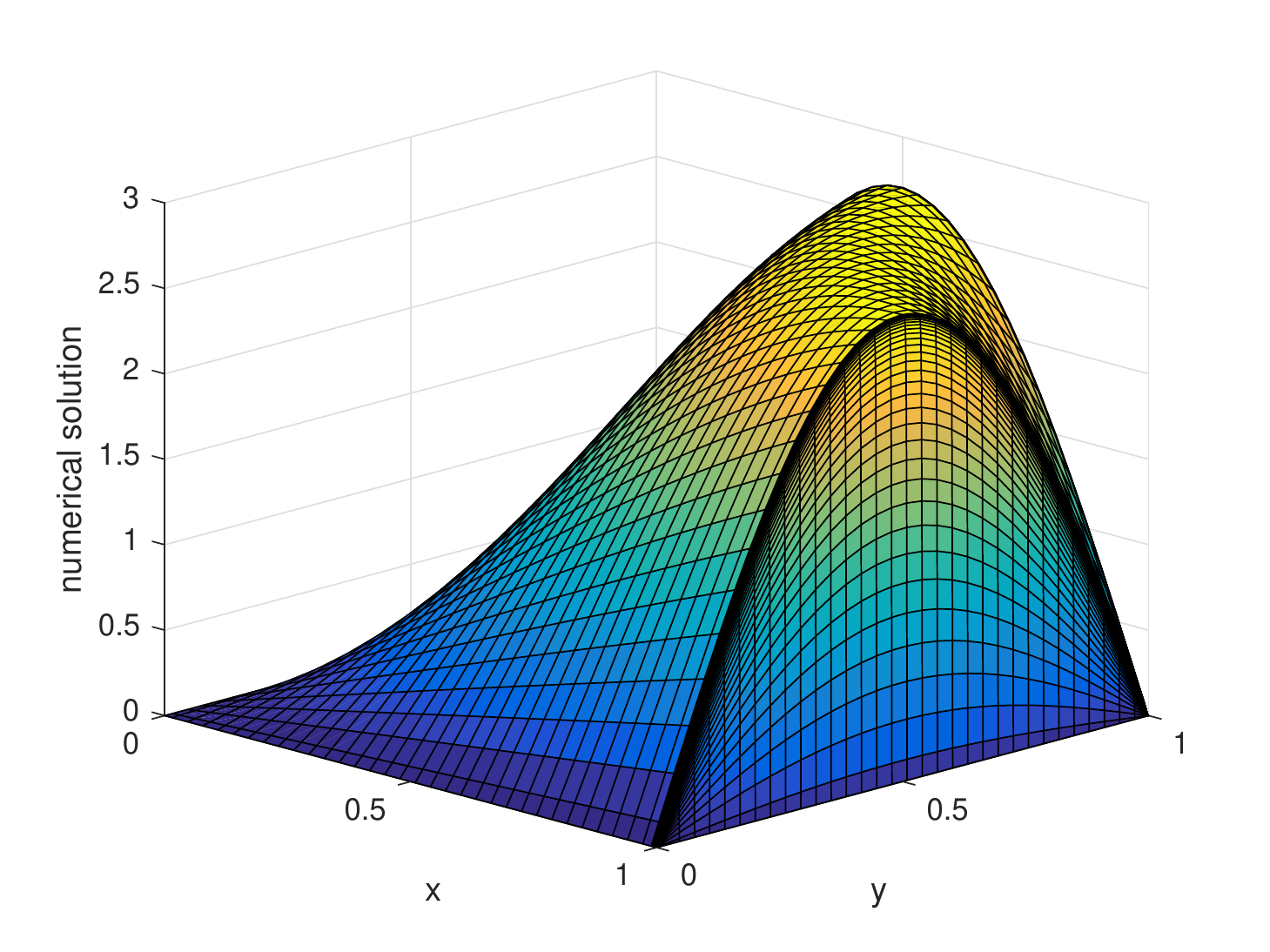}
\includegraphics[width=1.9in,height=1.6in]{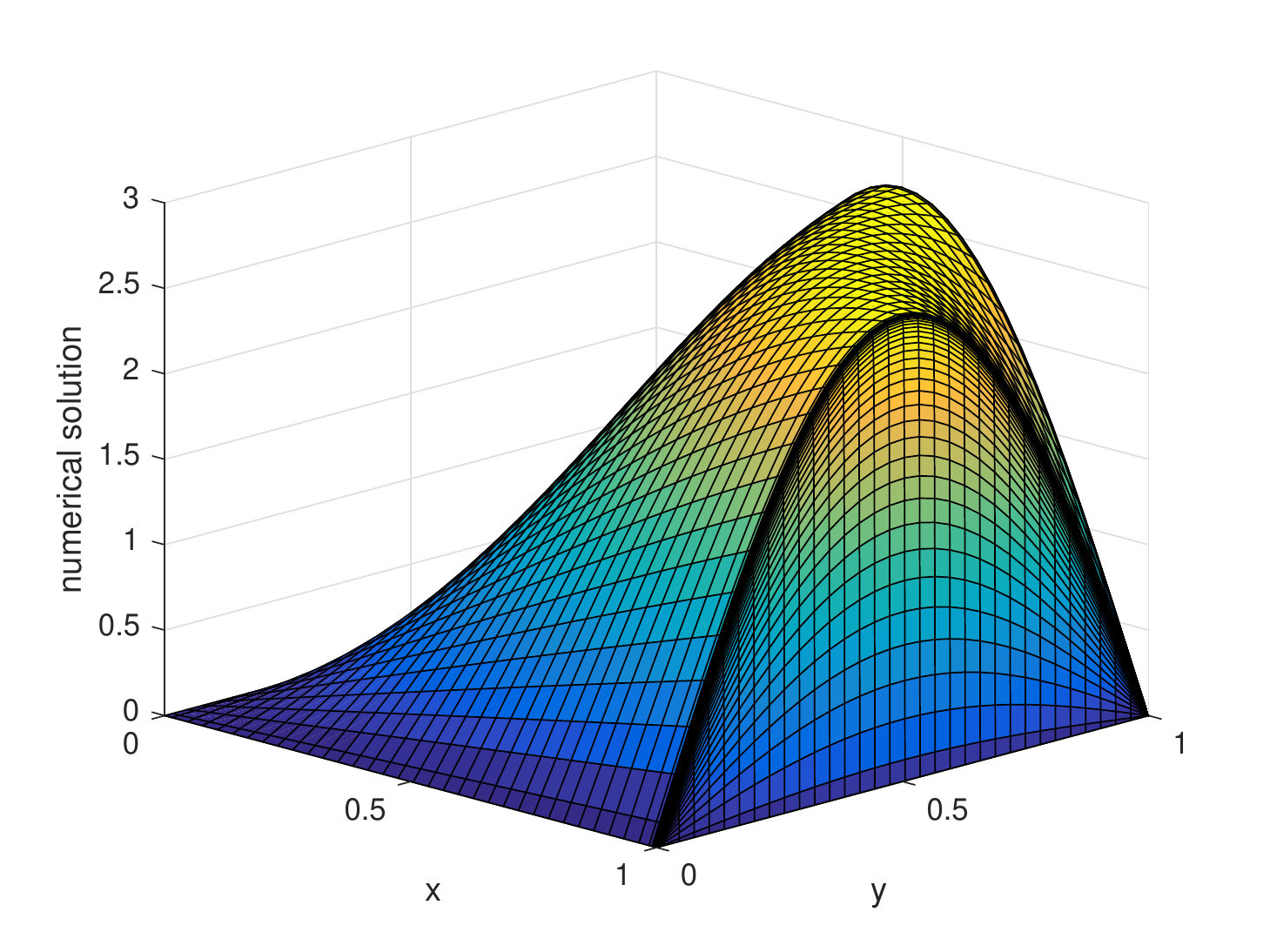}
%\hfill\mbox{}
\\
\includegraphics[width=1.9in,height=1.6in]{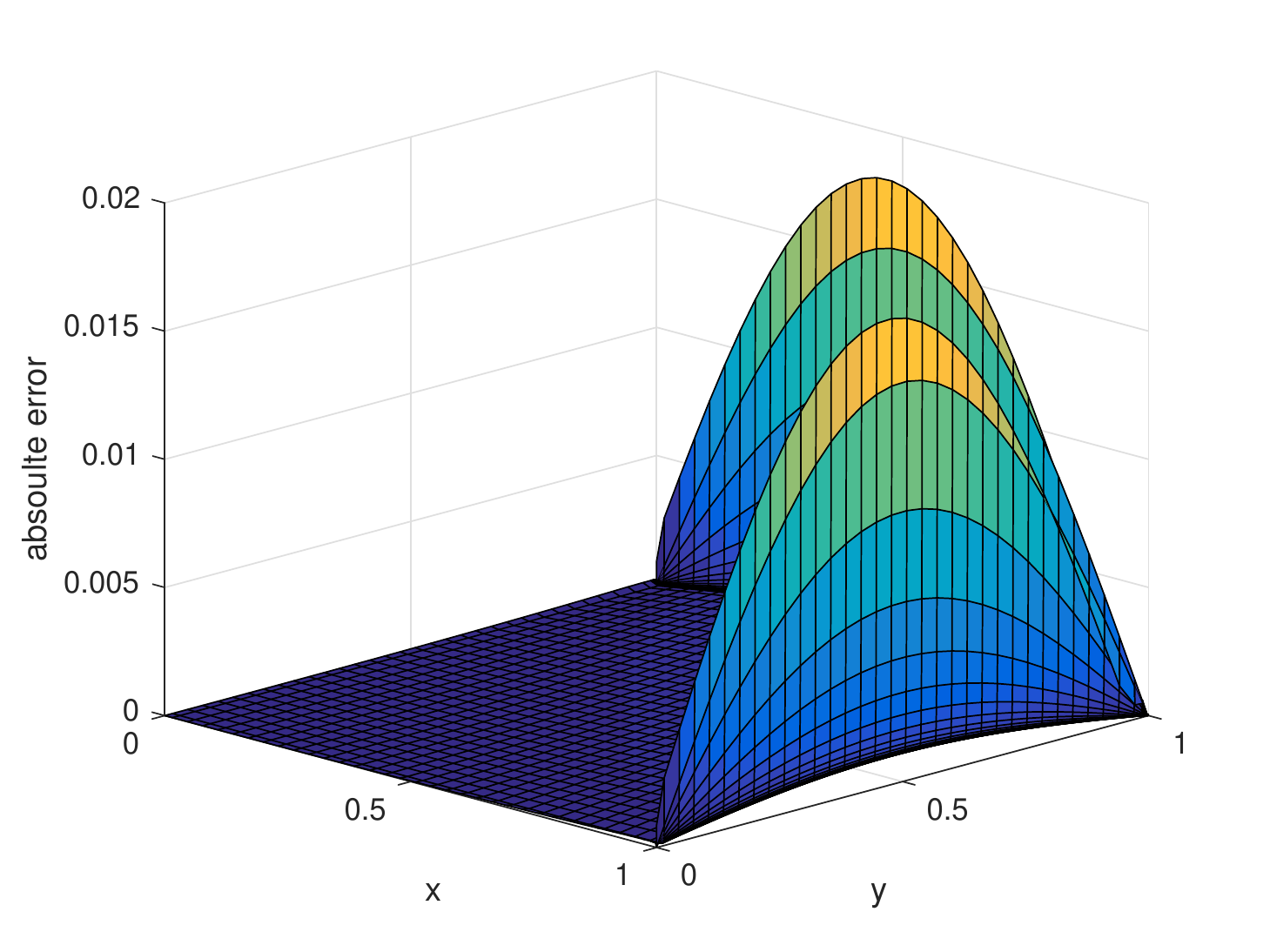}
\includegraphics[width=1.9in,height=1.6in]{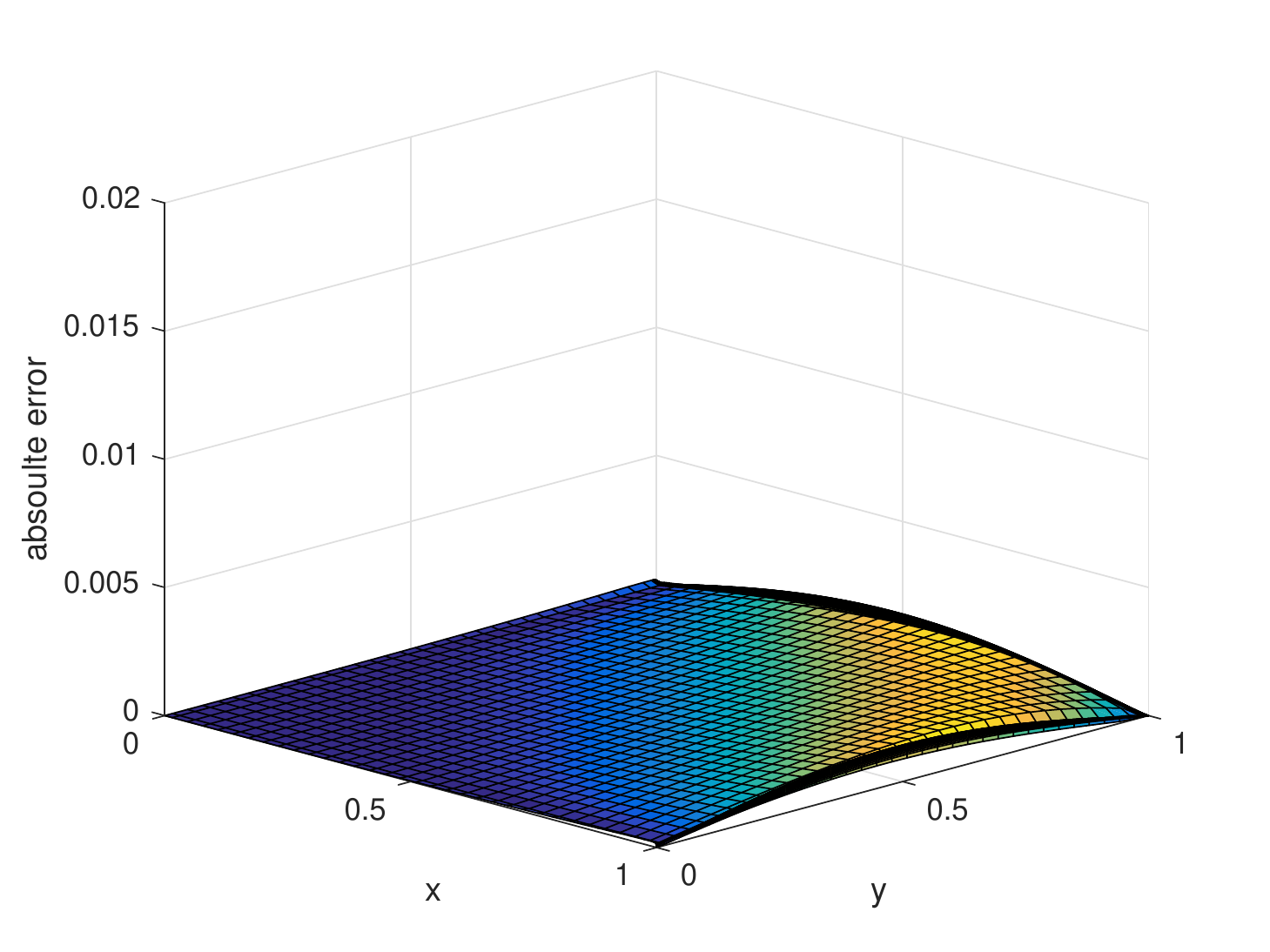}
\includegraphics[width=1.9in,height=1.6in]{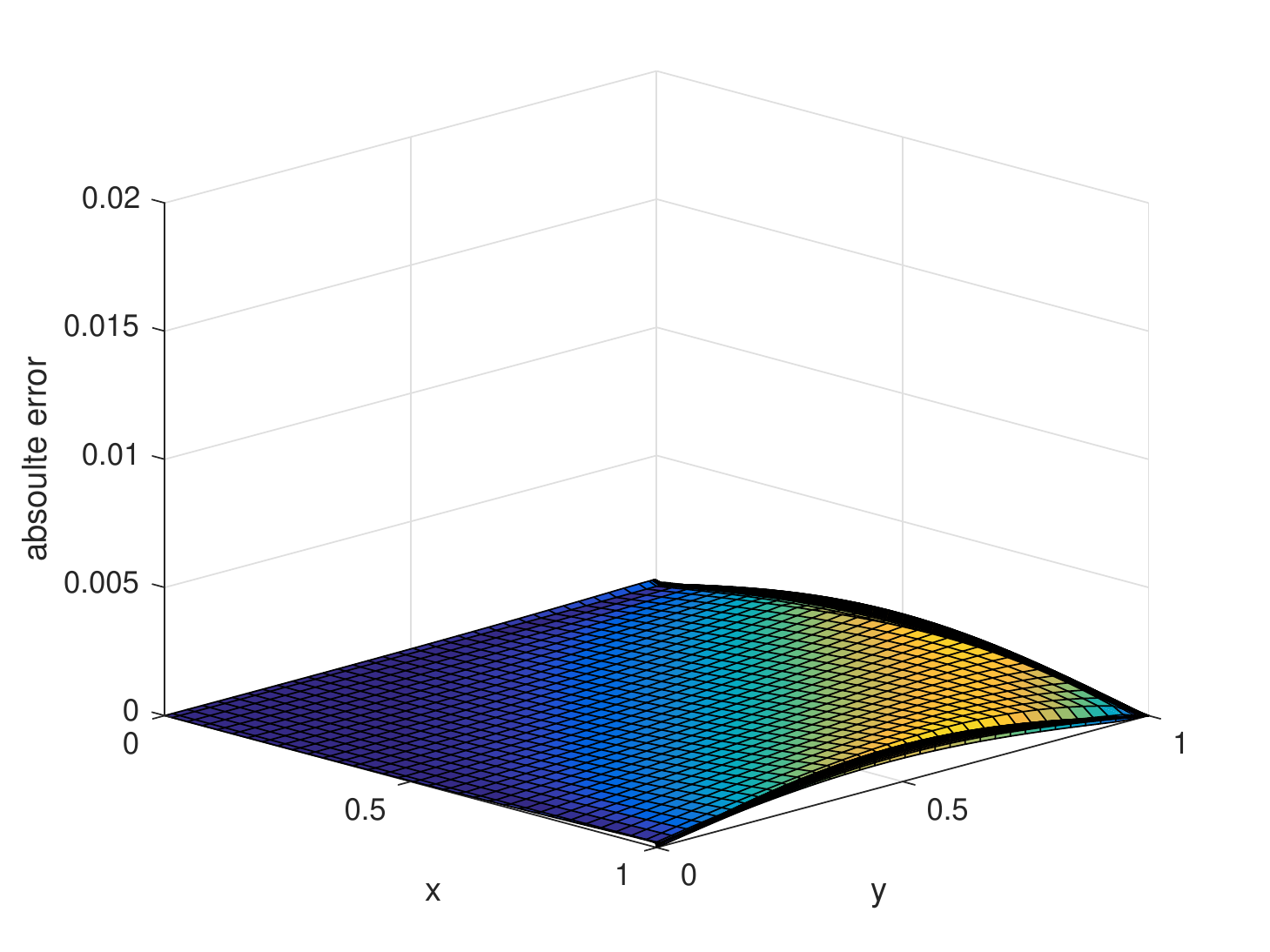}
\label{figure:solutions}
\end{figure}

\begin{table}[ht]
%\begin{sidewaystable}[ht]
%\tiny
%\scriptsize
%\footnotesize
%\small
\normalsize
\centering
\caption{$L^2$-error and convergence rate in space.}
\label{table:1/2:L2}
\begin{tabular}{cccccccc}
\toprule
\multirow{2}*{$k$}&\multirow{2}*{$N$} & \multicolumn{2}{c}{S-mesh} & \multicolumn{2}{c}{BS-mesh} & \multicolumn{2}{c}{B-type mesh} \\
\cmidrule(r){3-4} \cmidrule(r){5-6} \cmidrule(r){7-8}
&& $L^2$-error & $r_2$ & $L^2$-error & $r_2$ & $L^2$-error & $r_2$
\\
\midrule
1
& 4 & 1.64e-01 & - & 1.62e-01 & - & 1.59e-01 & - \\
& 8 & 4.39e-02 & 1.90 & 4.35e-02 & 1.90 & 4.35e-02 & 1.87\\
& 16 & 1.14e-02 & 1.94 & 1.14e-03 & 1.93 & 1.14e-02 & 1.93\\
& 32 & 2.93e-03 & 1.97 & 2.92e-03 & 1.96 & 2.92e-03 & 1.96\\
& 64 & 7.41e-04 & 1.98 & 7.40e-04 & 1.98 & 7.40e-04 & 1.98\\
& 128 & 1.86e-04 & 1.99 & 1.86e-04 & 1.99 & 1.86e-04 & 1.99\\
2
& 4 & 1.58e-02 & - & 1.59e-02 & - & 1.55e-02 & - \\
& 8 & 2.09e-03 & 2.91 & 2.09e-03 & 2.93 & 2.10e-03 & 2.89\\
& 16 & 2.75e-04 & 2.93 & 2.74e-04 & 2.93 & 2.75e-04 & 2.93\\
& 32 & 3.52e-05 & 2.97 & 3.52e-05 & 2.96 & 3.52e-05 & 2.97\\
& 64 & 4.45e-06 & 2.98 & 4.45e-06 & 2.98 & 4.45e-06 & 2.98\\
& 128 & 5.60e-07 & 2.99 & 5.59e-07 & 2.99 & 5.59e-07 & 2.99
\\
\bottomrule
\end{tabular}
%\end{sidewaystable}
\end{table}

\begin{table}[ht]
%\begin{sidewaystable}[ht]
%\tiny
%\scriptsize
%\footnotesize
%\small
\normalsize
\centering
\caption{Energy-error and convergence rate in space.}
\label{table:1/2:energy}
\begin{tabular}{cccccccc}
\toprule
\multirow{2}*{$k$}&\multirow{2}*{$N$} & \multicolumn{2}{c}{S-mesh} & \multicolumn{2}{c}{BS-mesh} & \multicolumn{2}{c}{B-type mesh} \\
\cmidrule(r){3-4} \cmidrule(r){5-6} \cmidrule(r){7-8}
&& energy-error& $r_S$ & energy-error & $r_2$ & energy-error & $r_2$
\\
\midrule
1
& 4  & 4.57e-01 & - & 3.77e-01 & - & 4.65e-01 & - \\
& 8  & 2.65e-01 & 1.89 & 1.52e-01 & 1.32 & 1.68e-01 & 1.47 \\
& 16 & 1.46e-01 & 1.48 & 5.76e-02 & 1.39 & 6.07e-02 & 1.47 \\
& 32 & 7.35e-02 & 1.46 & 2.12e-02 & 1.44 & 2.17e-02 & 1.48 \\
& 64 & 3.46e-02 & 1.48 & 7.64e-03 & 1.47 & 7.74e-03 & 1.49 \\
& 128 & 1.55e-02 & 1.49 & 2.73e-03 & 1.49 & 2.75e-03 & 1.49 \\
2
& 4  & 1.29e-01 & - & 7.32e-02 & - & 1.50e-01 & - \\
& 8  & 6.99e-02 & 2.13 & 1.79e-02 & 2.03 & 2.44e-02 & 2.62 \\
& 16 & 2.86e-02 & 2.21 & 3.70e-03 & 2.27 & 4.28e-03 & 2.51 \\
& 32 & 9.46e-03 & 2.35 & 7.06e-04 & 2.39 & 7.59e-04 & 2.50 \\
& 64 & 2.73e-03 & 2.44 & 1.30e-04 & 2.44 & 1.35e-04 & 2.50 \\
& 128 & 7.18e-04 & 2.47 & 2.34e-05 & 2.47 & 2.38e-05 & 2.50 \\
\bottomrule
\end{tabular}
%\end{sidewaystable}
\end{table}

\begin{table}[ht]
%\begin{sidewaystable}[ht]
%\tiny
%\scriptsize
%\footnotesize
%\small
\normalsize
\centering
\caption{$L^2$-norm error and convergence rate in time.}
\label{table:time:L2}
\begin{tabular}{cccccccccccccc}
\toprule
&\multicolumn{2}{c}{S-mesh} &\multicolumn{2}{c}{BS-mesh}
&\multicolumn{2}{c}{B-type mesh}\\
\cmidrule(r){2-3} \cmidrule(r){4-5} \cmidrule(r){6-7}
$\Delta t$& $L^2$-error & $r_2$ & $L^2$-error & $r_2$ & $L^2$-error & $r_2$
\\
\midrule
0.5 & 7.35e-03 & - & 7.35e-03 & - & 7.35e-03 & - \\
0.25 & 1.80e-03 & 2.03 & 1.80e-03 & 2.03 & 1.80e-03 & 2.03\\
0.125 & 4.53e-04 & 1.99 & 4.53e-04 & 1.99 & 4.53e-04 & 1.99\\
0.0625 & 1.13e-04 & 2.00 & 1.13e-04 & 2.00 & 1.13e-04 & 2.00\\
\bottomrule
\end{tabular}
%\end{sidewaystable}
\end{table}

\begin{table}[ht]
%\begin{sidewaystable}[ht]
%\tiny
%\scriptsize
%\footnotesize
%\small
\normalsize
\centering
\caption{Energy-norm error and convergence rate in time.}
\label{table:time:energy}
\begin{tabular}{cccccccccccccc}
\toprule
&\multicolumn{2}{c}{S-mesh} &\multicolumn{2}{c}{BS-mesh}
&\multicolumn{2}{c}{B-type mesh}\\
\cmidrule(r){2-3} \cmidrule(r){4-5} \cmidrule(r){6-7}
$\Delta t$& energy-error & $r_2$ & energy-error & $r_2$ & energy-error  & $r_2$
\\
\midrule
0.5 & 7.35e-03 & - & 7.35e-03 & - & 7.35e-03 & - \\
0.25 & 1.85e-03 & 1.99 & 1.85e-03 & 1.99 & 1.85e-03 & 1.99 \\
0.125 & 4.63e-04 & 2.00 & 4.62e-04 & 2.00 & 4.62e-04 & 2.00 \\
0.0625 & 1.21e-04 & 1.94 & 1.15e-04 & 2.00 & 1.15e-04 & 2.00 \\
\bottomrule
\end{tabular}
%\end{sidewaystable}
\end{table}

\begin{table}[htp]
%\begin{sidewaystable}[htp]
%\tiny
%\scriptsize
%\footnotesize
%\small
\normalsize
\centering
\caption{ $L^2$-norm error and energy-norm error
 for different $\varepsilon$.}
\label{table:eps}
\begin{tabular}{ccccccc}
\toprule
 & \multicolumn{3}{c}{$L^2$-error} &\multicolumn{3}{c}{energy-error}\\
\cmidrule(r){2-4} \cmidrule(r){5-7}
$\varepsilon$ & S-mesh & BS-mesh & B-type mesh & S-mesh & BS-mesh & B-type mesh\\
\midrule
$10^{-4}$  & 1.91e-04 & 1.86e-04 & 1.85e-04
& 1.55e-02 & 2.73e-03 & 2.74e-03\\
$10^{-5}$  & 1.87e-04 & 1.86e-04 & 1.86e-04
& 1.55e-02 & 2.73e-03 & 2.75e-03\\
$10^{-6}$  & 1.86e-04 & 1.86e-04 & 1.86e-04
& 1.55e-02 & 2.73e-03 & 2.75e-03\\
$10^{-7}$  & 1.86e-04 & 1.86e-04 & 1.86e-04
& 1.55e-02 & 2.73e-03 & 2.75e-03\\
$10^{-8}$  & 1.86e-04 & 1.86e-04 & 1.86e-04
& 1.55e-02 & 2.73e-03 & 2.75e-03\\
$10^{-9}$  & 1.86e-04 & 1.86e-04 & 1.86e-04
& 1.55e-02 & 2.73e-03 & 2.75e-03\\
$10^{-10}$  & 1.86e-04 & 1.86e-04 & 1.86e-04
& 1.55e-02 & 2.73e-03 & 2.75e-03\\
$10^{-11}$  & 1.86e-04 & 1.86e-04 & 1.86e-04
& 1.55e-02 & 2.73e-03 & 2.74e-03\\
\bottomrule
\end{tabular}
%\end{sidewaystable}
\end{table}

%\section{Concluding remarks}
%\label{sec:concluding}
%
%In this paper, we consider the LDG method
%for time-dependent singularly perturbed problems.
%Semi-discrete LDG method
%and two fully-discrete LDG methods are discussed.
%Based on the general layer-adapted mesh
%and a technical analysis for the Ritz projection
%on anisotropic meshes,
%we establish various error estimates
%for one-dimensional and two-dimensional problems.
%The error bounds are established uniformly or almost uniformly
%in the small singularly perturbed parameter.

\section*{Appendix}
\setcounter{equation}{0}
\setcounter{subsection}{0}
\renewcommand{\theequation}{A.\arabic{equation}}
\renewcommand{\thesubsection}{A.\arabic{subsection}}

In the Appendix,
we sketch some improved results in the one-dimensional case in space
and comment the alternative of a DG discretization in time.

\subsection{The one-dimensional case}
\setcounter{equation}{0}

We examine the LDG method for the one-dimensional singularly perturbed problem
\begin{subequations}\label{cd:spp}
\begin{align}
& u_t-\varepsilon u_{xx} + a(x) u_x + b(x) u
= f(x,t),  & \textrm{in} & \;  (0,1)\times (0,T],
\\
& u(x,0)=u_0(x),& \textrm{in} & \;  [0,1],
\\
& u(0,t) =u(1,t) = 0, & \textrm{for} & \;t\in (0,T].
\end{align}
\end{subequations}
Assume that $u$ can be decomposed as  (see \cite{Roos2008})
\[
u(x,t) = S(x,t)+E(x,t),
\]
with
\begin{align}\label{reg:u:1d}
\left| \partial_x^{j}\partial_t^{m}S(x,t)\right|
\leq C, \quad
\left| \partial_x^{j}\partial_t^{m} E(x,t)\right|
\leq C\varepsilon^{-j} e^{-\alpha(1-x)/\varepsilon}
\quad \forall j,m.
\end{align}

Define the discontinuous finite element space
\[
\mathcal{V}_{N}= \{v\in L^2 (\Omega)\colon
v|_{I_{j}} \in \mathcal{P}^{k} (I_{j}), j=1,...N \},
\]
where $\mathcal{P}^{k} (I_{j})$
denotes the space of polynomials
in $I_{j}$ of degree  $k\geq 0$.

The fully-discrete LDG $\theta$-scheme reads:\\
Let $\uph^0 = \pi u_0$ be the one-dimensional
$L^2$ projection of $u_0$.
For any $m=1,2,\dots,M$,
find the numerical solution
$\wph^{m}=(\uph^m,\qph^m)\in \mathcal{V}_N^2$ such that
\begin{align}\label{fully:theta:scheme:1d}
\dual{\frac{\uph^m-\uph^{m-1}}{\Delta t}}{\vphu}+B(\wph^{m,\theta};\vph)
=\dual{f^{m,\theta}}{\vphu},
\end{align}
holds for any $\vph=(\vphu,\vphq)\in \mathcal{V}_N^2$.
Here $B(\cdot;\cdot)$ is defined analogously as in the two-dimensional case.
The related energy norm is
\begin{align}
\label{energy:norm}
\enorm{\wN}^2%\equiv &\; B(\wN;\wN)
=
\varepsilon^{-1}\norm{\qN}^2
+\norm{(b-a_x/2)^{1/2}\uN}^2
+\sum_{j=0}^{N}\frac12 a_j\jump{\uN}^2_{j}
+\lambda_N \jump{\uN}_N^2.
\end{align}

A careful analysis shows that in 1D  an improved error estimate is possible:

\begin{theorem}\label{thm:theta:scheme}
The errors of fully-discrete LDG $\theta$-scheme
\eqref{fully:theta:scheme:1d} satisfy
\begin{subequations}\label{error:estimate:1d}
\begin{align}
\norm{u(T)-\uph^M}
\leq &\; C(1+T)\big((\Delta t)^\nu+(N^{-1}\max|\psi^{\prime}|)^{k+1}\big),
\\
\Delta t\sum_{m=1}^M \enorm{(\bm w-\wph)^{m,\theta}}
\leq &\; C(1+T)\big((\Delta t)^\nu+(N^{-1}\max|\psi^{\prime}|)^{k+1/2}\big),
\end{align}
\end{subequations}
where $\nu=2$ for $\theta=1/2$ and $\nu=1$ for $1/2<\theta\leq 1$.
Here $C>0$ is a constant independent of $\varepsilon$ and $N$.
\end{theorem}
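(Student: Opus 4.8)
The plan is to mirror the two-stage argument in the proof of Theorem~\ref{thm:fully:discrete}: first I would establish a one-dimensional Ritz-projection estimate that is sharper than its two-dimensional counterpart \eqref{app:Ritz:projection:2d}, and then feed it into the very same $\theta$-scheme energy argument. Since the temporal part is structurally identical, the whole improvement is concentrated in the projection bound. Concretely, with $\bm w=(u,q)=(u,\varepsilon u_x)$ and the Ritz projection $\digamma\bm w=(\digamma_1 u,\digamma_2 q)$ defined by the one-dimensional analogue of \eqref{Def:Ritz:projection:2d}, I would prove
\[
\enorm{\bm w-\digamma\bm w}\le C(N^{-1}\max|\psi^{\prime}|)^{k+1/2},\qquad
\norm{u-\digamma_1 u}+\norm{u_t-\digamma_1 u_t}\le C(N^{-1}\max|\psi^{\prime}|)^{k+1}.
\]
Everything downstream then follows the template already established.

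For the energy bound I would repeat the decomposition $\bm w-\digamma\bm w=\bm\zeta-\bm\delta$ and the boundedness/stability pair \eqref{boundedness:B}, \eqref{stability:bilinear:2d}. The point where the two-dimensional analysis pays the factor $Q^{\star}$ is the convection contribution: on a tensor-product mesh the loss comes entirely from the \emph{cross}-layer regions $\Omega_{12}\cup\Omega_{21}$, where one coordinate direction is coarse while $\zeta_u$ decays only in the other, together with the anisotropic factor $\sqrt{h_j/h_i}$ appearing in the superconvergence bound \eqref{2d:sup:1}. In one dimension neither feature is present: there are no cross regions, and the superconvergence estimate for the single bilinear form $\mathcal D_j(\zeta_u,\cdot)$ carries no aspect-ratio factor. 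Bounding the convection term element-by-element with the pointwise layer estimate $\norm{\zeta_u}_{L^\infty(I_j)}\le C\Theta_j^{k+1}$ and then using $\sum_j\Theta_j^{2(k+1)}\le(\max_j\Theta_j)^{2k+1}\sum_j\Theta_j\le C(N^{-1}\max|\psi^{\prime}|)^{2k+1}$ from Lemma~\ref{lemma:1}, I would obtain the convection term at order $(N^{-1}\max|\psi^{\prime}|)^{k+1/2}$ with no residual logarithm, and hence the stated energy estimate.

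The optimal $L^2$ order $k+1$ for $\norm{u-\digamma_1 u}$ does \emph{not} follow from the energy estimate, since the energy norm loses half an order through the interface-jump terms; here I would add a duality (Aubin--Nitsche) step absent from the two-dimensional treatment. For $g\in L^2$ I would solve the stationary adjoint problem, project its solution by $\digamma$, and use the Ritz orthogonality $B(\bm\delta;\vph)=B(\bm\zeta;\vph)$ to rewrite $\dual{\delta_u}{g}$ through $B(\bm\zeta;\Phi)$, where $\Phi$ is built from the adjoint solution. The superconvergence of $\mathcal D_j(\zeta_u,\cdot)$ then supplies one extra power of the mesh size, yielding $\norm{\delta_u}\le C(N^{-1}\max|\psi^{\prime}|)^{k+1}$; combined with $\norm{\zeta_u}\le C(N^{-1}\max|\psi^{\prime}|)^{k+1}$ and the analogous estimate for $u_t$ (obtained by differentiating the projection in $t$ and using that $u_t$ inherits the decomposition \eqref{reg:u:1d}), this gives the second displayed bound.

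Finally, with both projection estimates in hand, the temporal analysis is a verbatim repetition of the proof of Theorem~\ref{thm:fully:discrete}: split $\bm w-\wph=\bm\eta-\bm\xi$, derive the error equation (the analogue of \eqref{ineq:2}), Taylor-expand to get $\norm{L^m}\le C[(\theta-\tfrac12)\Delta t+(\Delta t)^2+(N^{-1}\max|\psi^{\prime}|)^{k+1}]$, invoke the stability $B(\bm\xi^{m,\theta};\bm\xi^{m,\theta})=\enorm{\bm\xi^{m,\theta}}^2\ge\beta^{1/2}\norm{\xi_u^{m,\theta}}\enorm{\bm\xi^{m,\theta}}$ with $\theta\ge1/2$, sum over $m$, and close with the triangle inequality. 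I expect the main obstacle to be the duality step: on a layer-adapted mesh the adjoint problem is itself singularly perturbed, so one must verify that its solution and the associated projection errors contribute neither an $\varepsilon$-dependent constant nor an extra logarithmic factor --- precisely where the smallness estimates of Lemma~\ref{lemma:1} and the absence of the anisotropic factor $\sqrt{h_j/h_i}$ must be exploited most carefully.
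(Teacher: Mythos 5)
Your overall skeleton is right and matches the paper's intent: the temporal part is a verbatim rerun of the proof of Theorem \ref{thm:fully:discrete}, and the entire improvement must be packed into the two projection estimates \eqref{app:Ritz:projection:1d}. Your explanation of why the energy bound sheds the factor $Q^{\star}$ in 1D (no cross-layer regions, no aspect-ratio factor $\sqrt{h_j/h_i}$) is also consistent with the paper. The problem is the step you yourself flag as the main obstacle: the order-$(k+1)$ $L^2$ bound for $u-\digamma_1 u$ and $u_t-\digamma_1 u_t$. You route this through an Aubin--Nitsche duality argument, and for this singularly perturbed problem that route is very likely a dead end: the adjoint problem has its exponential layer at $x=0$, i.e.\ at the boundary where the mesh is \emph{not} refined, so the adjoint solution cannot be approximated uniformly in $\varepsilon$ on the given mesh and no $\varepsilon$-uniform extra power of $N^{-1}$ can be extracted. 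You do not resolve this, so the key estimate \eqref{app:ritz:ut} remains unproved in your argument.

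The mechanism the paper actually relies on --- ``orthogonality and the exact collocations of the two one-dimensional Gauss--Radau projections $\pi^{\pm}$'' --- makes duality unnecessary. In 1D one has $(\zeta_u)^-_j=(u-\pi^-u)^-_j=0$ at every node and $(\zeta_q)^+_j=0$, so in $B(\bm\zeta;\vph)$ all interface sums involving $\zeta_u$ and $\zeta_q$ vanish identically (this is precisely what fails in 2D, where $\Pi^-=\pi_x^-\otimes\pi_y^-$ matches $u$ on edges only in a moment sense and the surviving traces cost the half order times $Q^\star$). The volume terms $\dual{\zeta_u}{\vphq_x}$ and $\dual{\zeta_q}{\vphu_x}$ vanish by orthogonality to $\mathcal{P}^{k-1}$, and the convection term reduces to $\dual{(a-\bar a)\zeta_u}{\vphu_x}$ with $\bar a$ a local average, which an inverse inequality bounds by $C\norm{\zeta_u}\,\norm{\vphu}$. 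Hence $B(\bm\zeta;\vph)\leq C(N^{-1}\max|\psi^{\prime}|)^{k+1}\enorm{\vph}$, and taking $\vph=\bm\delta$ in the error equation gives $\enorm{\bm\delta}\leq C(N^{-1}\max|\psi^{\prime}|)^{k+1}$ directly --- a full order better than in 2D and with no need for the $\interleave\cdot\interleave_{\sharp}$ stability. Since $\enorm{\bm\delta}$ controls $\norm{\delta_u}$ through the $\norm{(b-a_x/2)^{1/2}\delta_u}$ term, both \eqref{app:ritz:ut} and (after adding $\enorm{\bm\zeta}\leq C(N^{-1}\max|\psi^{\prime}|)^{k+1/2}$, whose half-order loss comes only from the non-vanishing traces $(\zeta_u)^+_j$) the energy bound \eqref{app:ritz:u:q} follow by the triangle inequality. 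You should replace the duality step by this direct argument; the rest of your proposal then goes through.
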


The proof is based on two improved approximation error estimates for the Ritz projection:
\begin{subequations}\label{app:Ritz:projection:1d}
\begin{align}
\label{app:ritz:u:q}
\enorm{\bm w-\digamma \bm w}\leq &\; C(N^{-1}\max|\psi^{\prime}|)^{k+1/2},
\\
\label{app:ritz:ut}
\norm{u_t-\digamma_1 u_t}\leq &\; C(N^{-1}\max|\psi^{\prime}|)^{k+1}.
\end{align}
\end{subequations}
For that orthogonality and the exact collocations of
the two one-dimensional Gauss-Radau projections $\pi^\pm$ used are important:
\begin{subequations}\label{GR:projection}
\begin{align}
\begin{cases}
\dual{\pi^{-} z}{\vphu}_{I_j} = \dual{z}{\vphu}_{I_j},
\; \forall \vphu\in \mathcal{P}^{k-1}(I_j),
\\
(\pi^- z)^-_{j}= z^{-}_{j}.
\end{cases}
\\
\begin{cases}
\dual{\pi^{-} z}{\vphu}_{I_j} = \dual{z}{\vphu}_{I_j},
\; \forall \vphu\in \mathcal{P}^{k-1}(I_j),
\\
(\pi^+ z)^+_{j-1}= z^{+}_{j-1}.
\end{cases}
\end{align}
\end{subequations}

\subsection{DG time-discretization}
\label{sec:DGLDG}

Instead of the $\theta$-scheme one can also
use a DG time-discretization. We present a corresponding result for the one-dimensional problem in space.

Let $r\geq 0$ be the polynomial order of the elements in time, and the discrete function space $\mathcal{V}_N^{\Delta t}$
 be given by
\begin{align}
\mathcal{V}_N^{\Delta t} \equiv
\{V\in L^2(0,T; \mathcal{V}_N):
V|_{K^m}\in \mathcal{P}^r(K^m,\mathcal{V}_N), m=1,2,\dots,M\}.
\end{align}
Here $\mathcal{P}^r(K^m,\mathcal{V}_N)$ denotes the space of $\mathcal{V}_N$-valued polynomials
of degree $r$ on the time interval $K^m$.
For a function $V\in \mathcal{V}_N^{\Delta t}$, we define at $t=t^m$ the one-sided limits
and the jump by $V^{m,\pm}= \lim_{t\rightarrow t^m\pm 0} V(t)$, $\jump{V}^m = V^{m,+}-V^{m,-}$.

Then, the space-time DG scheme reads:\\
 Find the discrete solution
$\wph=(\uph,\qph)\in \mathcal{V}_N^{\Delta t}\times \mathcal{V}_N^{\Delta t}$ such that
\begin{align}
\label{fully:DG:scheme}
\int_{K_m}(\dual{\uph_t}{\Vphu}+B(\wph;\Vph)) dt
+\dual{\jump{\uph}^{m-1}}{\Vphu^{m-1,+})}
=\int_{K_m} \dual{f}{\Vphu}dt
\end{align}
holds for any test function
$\Vph=(\Vphu,\Vphq)\in \mathcal{V}_N^{\Delta t}\times \mathcal{V}_N^{\Delta t}$,
$m=1,2,\dots,M$.
For the initial value we take $\uph^{0,-}=\pi u_0$ as before.

Following the ideas of \cite{Franz2018,Vlasak2014}
and the estimation \eqref{app:Ritz:projection:1d} for Ritz projection,
we can establish the following error estimates:

\begin{theorem}
Let $\bm w=(u,q)=(u,\varepsilon u_x)$ where $u$ is the solution of \eqref{cd:spp} satisfying \eqref{reg:u:1d}.
Let $\wph=(\uph,\qph)\in \mathcal{V}_N^{\Delta t}\times \mathcal{V}_N^{\Delta t}$
be the numerical solution given by \eqref{fully:DG:scheme}. Then the errors satisfy
\begin{subequations}
\begin{align}
\norm{u(T)-\uph(T)} \leq &\; C\big((\Delta t)^{r+1}+(N^{-1}\max|\psi^{\prime}|)^{k+1}\big),
\\
\left(\sum_{m=1}^M \int_{K^m}\enorm{\bm w-\wph}^2 dt\right)^{1/2}
\leq &\; C\big((\Delta t)^{r+1}+(N^{-1}\max|\psi^{\prime}|)^{k+1/2}\big),
\end{align}
\end{subequations}
where $C>0$ is a constant independent of $\varepsilon$ and $N$.
\end{theorem}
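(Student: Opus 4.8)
The plan is to follow the template of the proof of Theorem \ref{thm:fully:discrete}, replacing the $\theta$-scheme algebra by the standard stability and approximation theory of the discontinuous Galerkin time discretization, as in \cite{Franz2018,Vlasak2014}, while keeping the spatial Ritz projection $\digamma$ as the basic tool. First I would introduce the temporal Gauss--Radau projection $\Pi_t$ onto $\mathcal{V}_N^{\Delta t}$, defined slabwise on each $K^m$ by the right-endpoint collocation $(\Pi_t V)^{m,-}=V(t^m)$ together with $\int_{K^m}\dual{\Pi_t V-V}{W}\,dt=0$ for all $W\in\mathcal{P}^{r-1}(K^m,\mathcal{V}_N)$; this projection enjoys a slabwise approximation bound of order $(\Delta t)^{r+1}$ and is nodally exact at each $t^m$. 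Using the combined projection $\Pi_t\digamma$ I would split the error as $\bm e=\bm w-\wph=\bm\eta-\bm\xi$ with
\begin{align*}
\bm\eta=\bm w-\Pi_t\digamma \bm w,\qquad \bm\xi=\wph-\Pi_t\digamma \bm w\in\mathcal{V}_N^{\Delta t},
\end{align*}
so that $\bm\xi$ is an admissible test function. The projection part $\bm\eta$ is then bounded by the triangle inequality, its spatial contribution coming from \eqref{app:Ritz:projection:1d} and its temporal contribution from the $\Pi_t$-estimate.

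Subtracting the scheme \eqref{fully:DG:scheme} from the identity satisfied by the exact solution (recalling that $u$ is continuous in time, so its temporal jumps vanish) and inserting $\bm e=\bm\eta-\bm\xi$, I obtain on each slab the error equation
\begin{align*}
&\int_{K^m}\big(\dual{\xi_{u,t}}{\Vphu}+B(\bm\xi;\Vph)\big)\,dt+\dual{\jump{\xi_u}^{m-1}}{\Vphu^{m-1,+}}\\
&\qquad=\int_{K^m}\big(\dual{\eta_{u,t}}{\Vphu}+B(\bm\eta;\Vph)\big)\,dt+\dual{\jump{\eta_u}^{m-1}}{\Vphu^{m-1,+}}.
\end{align*}
The right-hand side then splits into two pieces. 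For the time-operator part, the defining properties of $\Pi_t$ exactly annihilate the action of $\partial_t$ together with the temporal jumps on the purely temporal projection error $\digamma_1 u-\Pi_t\digamma_1 u$ (a slabwise Galerkin orthogonality); since $u-\digamma_1 u$ is continuous in time and the Ritz projection commutes with $\partial_t$, this part collapses to $\int_{K^m}\dual{u_t-\digamma_1 u_t}{\Vphu}\,dt$, which by \eqref{app:ritz:ut} is of spatial order $(N^{-1}\max|\psi^{\prime}|)^{k+1}$. For the form part, the pointwise-in-time Ritz orthogonality $B(\bm w-\digamma \bm w;\Vph)=0$ reduces $\int_{K^m}B(\bm\eta;\Vph)\,dt$ to $\int_{K^m}B(\digamma \bm w-\Pi_t\digamma \bm w;\Vph)\,dt$, the bilinear form applied to the temporal projection error of $\digamma \bm w$, which carries the factor $(\Delta t)^{r+1}$.

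Next I would derive the discrete energy inequality by choosing $\Vph=\bm\xi$. The standard discontinuous-in-time identity gives
\begin{align*}
\int_{K^m}\dual{\xi_{u,t}}{\xi_u}\,dt+\dual{\jump{\xi_u}^{m-1}}{\xi_u^{m-1,+}}
=\tfrac12\norm{\xi_u^{m,-}}^2-\tfrac12\norm{\xi_u^{m-1,-}}^2+\tfrac12\norm{\jump{\xi_u}^{m-1}}^2,
\end{align*}
while the coercivity $B(\bm\xi;\bm\xi)=\enorm{\bm\xi}^2$ induced by the energy norm \eqref{energy:norm} supplies the dissipation $\int_{K^m}\enorm{\bm\xi}^2\,dt$. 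Estimating the right-hand side by Cauchy--Schwarz using the bounds of the previous paragraph, summing over $m=1,\dots,M$ so that the nodal terms telescope, and controlling the initial term $\norm{\xi_u^{0,-}}=\norm{\pi u_0-\digamma_1 u_0}$ by a spatial projection error of order $(N^{-1}\max|\psi^{\prime}|)^{k+1}$, I would obtain control of $\norm{\xi_u^{M,-}}^2+\sum_{m}\int_{K^m}\enorm{\bm\xi}^2\,dt$. The final $L^2$ estimate then follows from $u(T)-\uph(T)=\eta_u^{M,-}-\xi_u^{M,-}$, the nodal exactness of $\Pi_t$ (so that $\eta_u^{M,-}=(u-\digamma_1 u)(T)$), and the spatial bound $\norm{u-\digamma_1 u}\le C(N^{-1}\max|\psi^{\prime}|)^{k+1}$; the energy estimate follows from the triangle inequality together with the bound on $\bm\eta$.

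I expect the principal obstacle to be the clean treatment of the form consistency term $\int_{K^m}B(\digamma \bm w-\Pi_t\digamma \bm w;\Vph)\,dt$. Because $B(\cdot;\cdot)$ is nonsymmetric and contains the flux and convection couplings, it cannot simply be bounded by $\enorm{\cdot}\enorm{\cdot}$, and testing against $\enorm{\Vph}$ rather than the stronger norm $\interleave\cdot\interleave_{\sharp}$ used in Theorem \ref{lemma:ritz:2d} requires care; one must exploit that $\digamma \bm w-\Pi_t\digamma \bm w$ is discrete in space, so that inverse inequalities are available, and combine this with the slabwise temporal approximation estimate to extract exactly the factor $(\Delta t)^{r+1}$ without degrading the spatial orders. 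Arranging that all consistency pieces split additively into one pure temporal factor $(\Delta t)^{r+1}$ and the two pure spatial factors $(N^{-1}\max|\psi^{\prime}|)^{k+1}$ and $(N^{-1}\max|\psi^{\prime}|)^{k+1/2}$ is the delicate point; the remaining steps are routine adaptations of Theorem \ref{thm:fully:discrete} and of the DG-in-time analysis in \cite{Franz2018,Vlasak2014}.
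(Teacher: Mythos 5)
Your outline follows exactly the route the paper indicates for this theorem: the paper gives no detailed proof, only the remark that the result follows from the Ritz-projection estimates \eqref{app:Ritz:projection:1d} combined with the DG-in-time arguments of \cite{Franz2018,Vlasak2014}, which is precisely the combination of temporal Gauss--Radau projection, error splitting through $\Pi_t\digamma$, slabwise Galerkin orthogonality in time, and the telescoping energy argument you describe. The ``delicate point'' you flag (the consistency term $\int_{K^m}B(\digamma\bm w-\Pi_t\digamma\bm w;\Vph)\,dt$) is resolved in those references essentially as you anticipate --- e.g.\ by noting that $\Pi_t$ and $\digamma$ commute since the coefficients are time-independent, so Ritz orthogonality reduces it to $\int_{K^m}B(\bm w-\Pi_t\bm w;\Vph)\,dt$, to which the slabwise $(\Delta t)^{r+1}$ bound and the boundedness estimates already established for $B$ apply --- so your proposal is correct and is essentially the paper's approach.
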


\section*{Acknowledgements}

This research was supported by National Natural Science Foundation of China
(No. 11801396),
Natural Science Foundation of Jiangsu Province
(No. BK20170374),
Natural Science Foundation of the Jiangsu Higher Education Institutions of China
(No. 17KJB110016).

\end{document}